\newif\ifpictures
\numberwithin{equation}{section}
\newtheorem{thm}{Theorem}
\newtheorem{prop}[thm]{Proposition}
\newtheorem{lemma}[thm]{Lemma}
\newtheorem{cor}[thm]{Corollary}
\newtheorem{definition}[thm]{Definition}
\numberwithin{thm}{section}
\newcounter{FNC}[page]
\def\newfootnote#1{{\addtocounter{FNC}{2}$^\fnsymbol{FNC}$%
     \let\thefootnote\relax\footnotetext{$^\fnsymbol{FNC}$#1}}}
\newcommand{\N}{\mathbb{N}}
\newcommand{\R}{\mathbb{R}}
\newcommand{\Z}{\mathbb{Z}}
\newcommand{\sig}{\sigma}
\DeclareMathOperator{\sign}{sign}
\DeclareMathOperator{\rank}{rank}
\DeclareMathOperator{\Jac}{Jac}
\title[Low Dimensional Test Sets for Nonnegativity of Even Symmetric Forms]{Low Dimensional Test Sets for Nonnegativity of Even Symmetric Forms}
\author{Sadik Iliman} \author{Timo de Wolff}
\address{Goethe-Universit\"at, FB 12 -- Institut f\"ur Mathematik,
Postfach 11 19 32, D-60054 Frankfurt am Main, Germany}
\email{\{iliman,wolff\}@math.uni-frankfurt.de}
\subjclass[2010]{05E05, 14P10, 26C99}
\keywords{convexity, nonnegative polynomial, sums of squares, symmetry, test sets}
\begin{document}

\begin{abstract}
 An important theorem by Timofte states that nonnegativity of real $n$-variate symmetric polynomials of degree $d$ can be decided at test sets given by all points with at most $\lfloor\frac{d}{2}\rfloor$ distinct components. However, if the degree is sufficiently larger than the number of variables, then the theorem obviously does not provide nontrivial information. Our approach is to look at $(m + 1)$-dimensional subspaces of even symmetric forms of degree $4d$, at which nonnegativity can be checked at $(m - 1)$-points, i.e., points with at most $m - 1 \in \N$ distinct components, where $m$ is independent of the degree of the forms and better than Timofte's bound. Furthermore, for fixed $k \in \N$, we tackle problems concerning the maximum dimension of such subspaces, at which nonnegativity can be checked at all $k$-points, as well as the geometrical and topological structure of the set of all forms whose nonnegativity can be decided at all $k$-points.

\end{abstract}

\maketitle

\section{Introduction}

The theory of the cones of nonnegative polynomials and sums of squares is very crucial for many theoretical and practical problems in convex algebraic geometry (see, e.g., \cite{Blekherman:Parrilo:Thomas,Lasserre}). By Hilbert's theorem in \cite{Hilbert:Seminal} these two cones coincide exactly for binary forms $(n = 2)$, quadratic forms $(2d = 2)$ and ternary quartics $(n = 3, 2d = 4)$. In contrast to deciding whether a polynomial is a sum of squares, the problem of deciding nonnegativity of polynomials is an NP-hard problem (see, e.g., \cite{Blum:et:al:np}). Furthermore, it is known that for fixed degree $2d \geq 4$ and growing number of variables there are significantly more nonnegative polynomials than sums of squares (\cite{Blekherman:Volume}).
 Therefore, a convincing alternative approach in order to simplify the question whether a real polynomial $p$ of even degree $2d$ is nonnegative, is to classify  \textit{test sets} $\Omega \subset \mathbb R^n$ for nonnegativity of polynomials in order to reduce the complexity of deciding nonnegativity. Here, we call $\Omega \subset  \R^n$ a test set if $p(x) \geq 0$ for all $x \in \mathbb R^n$ if and only if $p(x)\geq 0$ for all $x \in \Omega$. For example, if $p$ is a homogeneous polynomial, then $\Omega := \mathbb S^{n-1}$ is a test set, but it does not reduce the complexity of deciding nonnegativity of homogeneous polynomials. Although for arbitrary polynomials test sets reducing the complexity of deciding nonnegativity are unknown as well as seemingly difficult to find, for \textit{symmetric} polynomials such test sets exist.

The problem of constructing test sets for symmetric forms began with the work of Choi, Lam, Reznick in \cite{Choi:Lam:Reznick}, in which the authors considered test sets for even symmetric sextics and were able to give a complete
semialgebraic characterization of nonnegative even symmetric sextics and even symmetric sextics that are sums of squares. The key result is that checking nonnegativity in this case can be reduced
to checking nonnegativity of univariate polynomials since it suffices to prove nonnegativity of even symmetric sextics at all points in $\mathbb R^n$ with at most one nonzero
component. Later, Harris (\cite{Harris}; see also \cite{Harris:Diss}) generalized some results by establishing that even symmetric octics (degree $2d = 8$) are nonnegative if and only if they are nonnegative at all points in $\mathbb R^n$ with at most two nonzero components. Indeed, in the case of even symmetric ternary octics he showed that every such nonnegative form 
is a sum of squares. Additionally, he proved that nonnegativity of even symmetric ternary decics ($n = 3, 2d = 10$) can be decided at points with at most two nonzero components, too. However, he also proved that nonnegativity of even symmetric ternary forms of degree $2d \geq 12$ cannot be checked by considering points with at most two nonzero components.

In \cite{Timofte} Timofte proved a very powerful result, namely that a symmetric polynomial of degree $d$ is nonnegative if and only if it is nonnegative at all points with at most $\lfloor\frac{d}{2}\rfloor$ distinct components. Later, Riener was able to reprove this result in a much more elementary setting than in the original work, where most techniques are based on the theory of differential equations (see \cite{Riener}; see also \cite{Riener:Diss}). For further results concerning nonnegativity of symmetric polynomials see, e.g., \cite{Blekherman:Riener,Gatermann:Parrilo}.\\

In this paper we are interested in even symmetric homogeneous polynomials (forms). We consider the question how to identify test sets for such forms and investigate their properties. In particular, we analyze under which additional conditions on even symmetric forms the bound of at most $\lfloor\frac{d}{2}\rfloor$ distinct components given by Timofte's theorem can be further improved.
Polynomials with such interesting structure are those lying in certain subspaces. We analyze the question whether it is even possible that there exist uniform bounds better than Timofte's one and independent of the degree of the polynomials.

We prove existence of such uniform bounds at certain subspaces of forms of degree $4d$. As a base case we extend results in \cite{Harris} and look at $4$-dimensional subspaces of forms given as
\begin{eqnarray*}
	p & := & \alpha M_{j_1}^{k_1} \cdots M_{j_r}^{k_r} + \beta M_2^{2d} + \gamma M_{2d}^2 + \delta M_{2d}M_2^d,
\end{eqnarray*}
where $\alpha,\beta,\gamma,\delta \in \R^*$, $M_j := \sum_{i=1}^n x_i^j$ is the $j$-th power sum polynomial and the following conditions are satisfied
\begin{eqnarray}
	& & j_1,\ldots,j_r \in 2\N, \ k_1,\ldots,k_r \in \N, \ \sum_{i = 1}^r j_i k_i = 4d, j_1 \notin \{2,2d\}, \label{Equ:ClassConditionsIntro} \\
	& & \text{and either } j_1,\ldots,j_r \leq 2d \text{ or } j_2,\ldots,j_r \in \{2,2d\}. \nonumber
\end{eqnarray}
Hence, the set of nonnegative forms of this type comprises a $4$-dimensional subcone 
of the cone of real even symmetric forms of degree $4d$. By adjusting the number of variables, we extend our approach to subspaces of arbitrary dimensions given by
\begin{eqnarray}
 p(x_1,\dots,x_n) & := & \sum_{i=1}^{m-2}\alpha_if_i(x) + \beta M_2^{2d} + \gamma M_{2d}^2 + \delta M_{2d}M_2^d, \label{Equ:OurClass2Intro}
\end{eqnarray}
where $\alpha_i, \beta, \gamma, \delta \in \mathbb R^*$, $m \leq n$ and $f_i(x)$ is a product of power sums $M_j$ of degree $4d$ satisfying some additional conditions, which can be considered as generalizations of the conditions \eqref{Equ:ClassConditionsIntro} (see also \eqref{Equ:ClassConditions2}).
For both types of subspaces we prove that the number of nonzero components, one has to check for nonnegativity of these forms, is bounded by two resp. $m - 1$ (Theorems \ref{Thm:Main} and \ref{Thm:Main2}). These bounds are often better and more useful as in Timofte's theorem, especially
when the degree $4d$ is significantly larger than the number of variables. Furthermore, for fixed $k \in \N^*$, we investigate the maximum dimension $m + 1$ such that, with slight abuse of notation, at all $(m + 1)$-subspaces given by a certain basis nonnegativity can be decided at all $k$-points. Additionally, we consider the set of all such forms where nonnegativity can be decided at all $k$-points.  In special cases we can explicitly determine these maximum dimensions (Theorem \ref{thm:maximum}) and prove that the corresponding set of all forms for which nonnegativity can be decided at points with at most two nonzero distinct components is not convex (Corollary \ref{cor:convex}). For convenience, we summarize our results in Theorem \ref{thm:mainmain}.

This paper is organized as follows: In Section \ref{Sec:Preliminaries} we provide some basic tools and definitions from the theory of symmetric polynomials. In Section \ref{Sec:testsets} we introduce test sets and present core problems on them we are interested in. Furthermore, we state Timofte's theorem and summarize our results (Theorem \ref{thm:mainmain}). In Section \ref{Sec:DimensionFour}
 we consider a $4$-dimensional subspace of even symmetric forms of degree $4d$ and prove our first main result. This is in contrast to the bound of Timofte's theorem in \cite{Timofte}.
Indeed, in general, it is not sufficient to investigate the set of all points with at most two  distinct nonzero components in order to prove nonnegativity of even symmetric forms of degree $4d \geq 12$ (see, e.g., \cite{Harris}). We end this section by applying our results on some examples and provide some conjectures based on these experiments. In Section \ref{Sec:arbitrary} we consider subspaces of arbitrary dimension given as in \eqref{Equ:OurClass2Intro}. We prove our second main result by adjusting the number of variables and generalizing techniques from Section \ref{Sec:DimensionFour}. In Section \ref{Sec:konstante} we tackle the problems concerning the maximum dimensions of the above subspaces resp. the geometrical and topological structure of the set of all forms whose nonnegativity can be decided at all $k$-points. Here, we prove Theorem \ref{thm:maximum} and Corollary \ref{cor:convex}. Finally, in Section $7$ we discuss some open problems.

\section{Preliminaries}
\label{Sec:Preliminaries}
In this section we introduce some notations and facts that are essential for upcoming results. We begin with some classical 
facts about symmetric polynomials. Let $\mathbb R[x_1,\dots,x_n]_d^S$ be the ring of symmetric polynomials of degree $d \in \mathbb N$. A homogeneous symmetric polynomial is
called \textit{even symmetric form} if all exponents are even. A vector 
$\lambda = (\lambda_1,\dots,\lambda_n)$ is called a \textit{partition} of $d$ if $\lambda_1\geq\lambda_2\geq\dots\geq\lambda_n\geq 0$ and $\lambda_1+\dots +\lambda_n = d$.
The dimension of $\mathbb R[x_1,\dots,x_n]_d^S$ is given by the number of partitions of $d$ with length at most $n$. Note that the dimension of $\mathbb R[x_1,\dots,x_n]_d^S$ is fixed, i.e., independent of $n$, whenever $n\geq d$. A fundamental theorem in the theory of symmetric polynomials states that every symmetric polynomial $p\in\mathbb R[x_1,\dots,x_n]_d^S$ can be written as a polynomial in the power sums $M_r(x) = \sum_{i=1}^n x_i^r, 0\leq r\leq n$. For an overview and introduction see, e.g., \cite{Sagan}. \\

We now introduce \textit{Schur polynomials} that also form a basis of $\mathbb R[x_1,\dots,x_n]_d^S$.
Let $d \neq 0$ be a natural number with a partition $d = \sum_{j = 1}^l d_j$, $d_1 \geq \cdots \geq d_l$ where all $d_j$ are positive integral numbers. For a fixed partition $(d_1,\ldots,d_l)$ the $l$\textit{-variate monomial symmetric function} $m_{(d_1,\ldots,d_l)}$ is given by
\begin{eqnarray*}
	m_{(d_1,\ldots,d_l)} & := & \sum_{\sig \in S_l} x_{\sig(1)}^{d_1} \cdots x_{\sig(n)}^{d_l},
\end{eqnarray*}
where $S_l$ is the symmetric group in $l$ elements and $\sig$ denotes a permutation in $S_l$.
We define 
\begin{eqnarray}
	D_{(d_1,\ldots,d_l)} & := & \det
	\left(\begin{array}{cccc}
		x_1^{d_1 + l -1} & x_1^{d_2 + l - 2} 	& \cdots & x_1^{d_l} \\
		\vdots		 & \vdots 		& \ddots & \vdots \\
		x_l^{d_1 + l -1} & x_l^{d_2 + l - 2} 	& \cdots & x_l^{d_l} \\
	\end{array}\right).
\label{Equ:Determinant}
\end{eqnarray}
Furthermore, we denote the determinant $\prod_{1 \leq i,j \leq l} (x_i - x_j)$ of the $(l \times l)$-Vandermonde Matrix by $\Delta_l$, i.e.,
\begin{eqnarray}
	\Delta_l & := & \det
	\left(\begin{array}{cccc}
		x_1^{l - 1} 	& x_1^{l - 2} 	& \cdots & 1 \\
		\vdots		& \vdots	& \ddots & \vdots \\
		x_l^{l - 1} 	& x_l^{l - 2} 	& \cdots & 1 \\
	\end{array}\right).
\label{Equ:Vandermonde}
\end{eqnarray}

The \textit{Schur function} $S_{(d_1,\ldots,d_l)}$ is defined as
\begin{eqnarray}
	S_{(d_1,\ldots,d_l)} & := & \frac{D_{(d_1,\ldots,d_l)}}{\Delta_l}.
\label{Equ:Schur}
\end{eqnarray}

It is a well known fact that Schur functions are, indeed, symmetric polynomials, which contain an amazing combinatorial structure. For example, the set of $l$-variate Schur polynomials form a basis of the ring of all symmetric $l$-variate polynomials and the monomials of the Schur polynomial $S_{(d_1,\ldots,d_l)}$ are in one to one correspondence to all semistandard $(d_1,\ldots,d_l)$-tableaux.
For our needs the following proposition is crucial (see, e.g., \cite{Sagan}).

\begin{prop}
Let $d_1 \geq \cdots \geq d_l \in \N$. The Schur polynomial $S_{(d_1,\ldots,d_l)}$ can be expressed as
\begin{eqnarray*}
	S_{(d_1,\ldots,d_l)} & = & \sum_{\{(c_1,\ldots,c_l) \in \N^l \ : \ \sum_{j = 1}^r c_j \leq \sum_{j = 1}^k d_j \text{ for all } 1 \leq r \leq l\}} \kappa_{(c_1,\ldots,c_l),(d_1,\ldots,d_l)} m_{(c_1,\ldots,c_l)},
\end{eqnarray*}
where all $\kappa_{(c_1,\ldots,c_l),(d_1,\ldots,d_l)}$ are nonnegative integers and all $m_{(c_1,\ldots,c_l)}$ are monomial symmetric functions.
\label{Prop:SchurPolynomial}
\end{prop}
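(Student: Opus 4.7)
The plan is to reduce the statement to the classical combinatorial expansion of Schur polynomials as generating functions of semistandard Young tableaux (SSYT). The key preliminary fact, which is a standard consequence of the bialternant definition \eqref{Equ:Schur} (see, e.g., \cite{Sagan}), is the identity $S_{(d_1,\ldots,d_l)} = \sum_T x^T$, where $T$ ranges over all SSYTs of shape $(d_1,\ldots,d_l)$ with entries in $\{1,\ldots,l\}$, and $x^T := x_1^{\mu_1(T)}\cdots x_l^{\mu_l(T)}$, with $\mu_i(T)$ counting the occurrences of $i$ in $T$. This identity is typically established either through the Lindstr\"om--Gessel--Viennot lemma applied to nonintersecting lattice paths, or by an elementary sign-reversing involution on the alternants in \eqref{Equ:Determinant}.

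Given this tableau description, I would collect tableaux by their content vector $\mu$. The number of SSYTs of shape $(d_1,\ldots,d_l)$ and content $\mu$ is the Kostka number $K_{(d_1,\ldots,d_l),\mu} \in \N$. Since $S_{(d_1,\ldots,d_l)}$ is symmetric in $x_1,\ldots,x_l$ (which itself follows from the joint antisymmetry of the determinants in \eqref{Equ:Determinant} and \eqref{Equ:Vandermonde}), the coefficient of $x^\mu$ is invariant under permutations of $\mu$. Hence, restricting to weakly decreasing $\mu$ and collecting monomials yields
$$S_{(d_1,\ldots,d_l)} \;=\; \sum_{c} K_{(d_1,\ldots,d_l),c}\, m_{(c_1,\ldots,c_l)},$$
which immediately provides the nonnegative integrality of the coefficients $\kappa_{(c_1,\ldots,c_l),(d_1,\ldots,d_l)}$.

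The final step is to verify the dominance condition on the support: namely, that $K_{(d_1,\ldots,d_l),c} = 0$ unless $\sum_{j=1}^r c_j \leq \sum_{j=1}^r d_j$ holds for every $1 \leq r \leq l$. For this I would exploit column-strictness of SSYTs: in any tableau $T$ of shape $(d_1,\ldots,d_l)$, each column is strictly increasing, so every cell containing an entry that is $\leq r$ must lie in one of the first $r$ rows of $T$. Consequently, the total number of entries $\leq r$ in $T$, which equals $c_1 + \cdots + c_r$, is bounded above by the number of cells available in the first $r$ rows, namely $d_1 + \cdots + d_r$. This column-strictness argument is the only nonroutine step once the tableau expansion is in hand; the main technical obstacle is really the translation from the determinantal formula \eqref{Equ:Schur} to the SSYT generating function, after which the remainder reduces to the short combinatorial estimate above.
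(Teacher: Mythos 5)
The paper does not actually prove this proposition; it cites it to \cite{Sagan}, where the standard tableau-theoretic argument you sketch is what one finds. Your proof is correct: the reduction to the SSYT generating function, the regrouping by content to obtain Kostka numbers (using the symmetry of $S_{(d_1,\ldots,d_l)}$ to restrict to weakly decreasing content), and the column-strictness argument for the dominance bound -- a cell holding value $\leq r$ must sit in one of the first $r$ rows, so the count $c_1+\cdots+c_r$ of such entries is at most the number $d_1+\cdots+d_r$ of available cells -- are all accurate. One small point worth flagging: the proposition as printed has a typo in the summation condition, writing $\sum_{j=1}^k d_j$ where $\sum_{j=1}^r d_j$ is clearly intended; your argument implicitly corrects this.
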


Notice that the natural numbers $\kappa_{(c_1,\ldots,c_l),(d_1,\ldots,d_l)}$ are called \textit{Kostka numbers}. Combinatorially, $\kappa_{(c_1,\ldots,c_l),(d_1,\ldots,d_l)}$ equals the cardinality of the set of all semistandard $(c_1,\ldots,c_l)$-tableaux of type $(d_1,\ldots,d_l)$.\\

\section{The Structure of Test Sets}
\label{Sec:testsets}
In the following we provide a brief discussion of the structure of test sets. 
We define \textit{test sets} and $k$\textit{-points},
state Timofte's theorem, and set up the major notations and problems for the remainder of this work.

\begin{definition}
We say that a set $\Omega\subset \mathbb R^n$ is a \emph{test set} for $p \in \mathbb R[x_1,\dots,x_n]_d^S$ if the following does hold: $p\geq 0$ if and only if $p(x)\geq 0$ for all $x\in\Omega$.
\end{definition}

Known test sets for symmetric polynomials are always given by $k$-\textit{points}, i.e., by points with a bounded number of distinct components. For this, we define the following two sets.

\medskip
\begin{definition}
~
\begin{enumerate}
 \item Let $\Omega_k$ denote the set of all points $(x_1,\ldots,x_n) \in \R_{}^n$ such that there exist $a_1 < \ldots < a_k \in \R_{}$ with $x_i \in \{a_1,\ldots,a_k\}$ for every $1 \leq i \leq n$.
 \item Let $\Omega_k^+$ denote the set of all points $(x_1,\ldots,x_n) \in \R_{\geq 0}^n$ such that there exist $a_1 < \ldots < a_k \in \R_{>0}$ with $x_i \in \{0,a_1,\ldots,a_k\}$ for every $1 \leq i \leq n$. In this case we call a point $x\in\mathbb R^n$ a $k$-\emph{point}.
\end{enumerate}
\end{definition}

Timofte's theorem can be stated as follows.

\begin{thm}[Timofte \cite{Timofte}]
\label{thm:Timofte}
 Let $p\in\mathbb R[x_1,\dots,x_n]_{2d}^S$ with $d \geq 2$. Then 
\begin{enumerate}
 \item $\Omega_d$ is a test set for $p$.
\item  If $p$ is even symmetric, then $\Omega_{\lfloor \frac{d}{2}\rfloor}^+$ is a test set for $p$.
\end{enumerate}
\end{thm}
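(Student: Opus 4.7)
The plan is to follow Riener's elementary approach, deducing both parts from a half-degree principle for critical points of symmetric polynomials on compact level sets.

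First I would reduce part~(2) to part~(1). The substitution $y_i = x_i^2$ sends an even symmetric $p$ of degree $2d$ in $x$ to a symmetric polynomial $\tilde p$ of degree $d$ in $y$, with $p\geq 0$ on $\mathbb{R}^n$ equivalent to $\tilde p\geq 0$ on $\mathbb{R}_{\geq 0}^n$. Applying the nonnegative-orthant analogue of part~(1), which is proved by the same strategy on the sphere intersected with $\mathbb{R}_{\geq 0}^n$ using KKT in place of unconstrained Lagrange multipliers, yields a test set of points in $\mathbb{R}_{\geq 0}^n$ with at most $\lfloor d/2\rfloor$ distinct positive components; this pulls back to $\Omega_{\lfloor d/2\rfloor}^+$ for $p$.

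For part~(1), I would argue by contradiction. Assume $p\geq 0$ on $\Omega_d$ but $p(x^*) < 0$ for some $x^*\in\mathbb{R}^n$. Consider the compact sphere $S_r = \{x : M_2(x) = r\}$ with $r = M_2(x^*)$; then $p$ attains a minimum on $S_r$ at some $x^0$ with $p(x^0)\leq p(x^*) < 0$. Lagrange multipliers give $\partial p/\partial x_i(x^0) = 2\mu\, x_i^0$ for all $i$ and some common $\mu\in\mathbb{R}$. Since $p$ is symmetric, $\partial p/\partial x_i$ is a polynomial in $x_i$ of degree $\leq 2d-1$ whose coefficients are symmetric polynomials in the remaining variables; evaluating at $x^0$ shows that every $x_i^0$ is a root of one fixed univariate polynomial of degree $\leq 2d-1$, giving only the weak bound of $2d-1$ distinct components.

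The hardest part of the plan is sharpening this to $d$. The key observation is that the stratum of points with $k$ distinct values (and fixed multiplicities) is a $k$-dimensional subvariety of $\mathbb{R}^n$, on which $p$ is determined by the first $k$ power sums $M_1,\ldots,M_k$ (these already determine the multiset of values $\{a_1,\ldots,a_k\}$). If $k > d$, one combines this with the critical point equations to exhibit a continuous deformation $x(t)$ of $x^0$ confined to the stratum and preserving $M_2(x(t))$, along which $p(x(t))$ is nonincreasing and two coordinate values collide in finite time. This reduces the number of distinct values of $x^0$ by one while keeping $p\leq p(x^0) < 0$ and $M_2$ fixed. Iterating down to $k\leq d$ yields a point in $\Omega_d$ with negative $p$-value, the desired contradiction. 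Constructing this deformation and controlling the sign of the variation of $p$ along it, via the Vandermonde structure of the Jacobian of the power-sum map, is the delicate core of the argument; the reductions and the Lagrange setup are comparatively routine.
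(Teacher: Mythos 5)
The paper does not prove this theorem; it is stated as a cited result from Timofte, with a note in the introduction that Riener later gave a more elementary proof, and neither argument is reproduced in the text. There is therefore no ``paper's own proof'' to compare against, and your sketch must stand on its own.

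Your proposal is broadly in the spirit of Riener's approach, and the outer framing is a sensible outline: reduce part (2) to a constrained version of part (1) via $y_i = x_i^2$, then run a Lagrange argument on a sphere. (A small point: the paper's part (1) is stated only for even degree $2d$, so for the reduction you would need the general form of the half-degree principle to cover $\tilde p$ of odd degree $d$.) The real problem is the core of part (1). You correctly note that the Lagrange condition alone gives only the weak bound of $2d-1$ distinct coordinates, and you propose to sharpen it by deforming $x^0$ inside the stratum of points with $k$ distinct values, keeping $M_2$ fixed, with $p$ nonincreasing until two coordinate values collide. But you give no mechanism that would force $p$ to be nonincreasing along such a deformation, and that is exactly where Riener's argument uses its key structural lemma, which your sketch never states: a symmetric polynomial of degree $2d$, written in the power-sum basis, is \emph{affine-linear} in each of $M_{d+1},\ldots,M_{2d}$, since any product $M_iM_j$ with $i,j>d$ already exceeds degree $2d$. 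Consequently, on a fibre $\{M_1=c_1,\ldots,M_d=c_d\}$ (compact once $M_2$ is fixed), $p$ restricts to a linear functional of $(M_{d+1},\ldots,M_{2d})$, whose minimum over the image of the fibre is attained at a boundary point; the Vandermonde-rank computation you gesture at then identifies these boundary points with the images of $d$-points. Without invoking this linearity, the monotone deformation you want has no reason to exist, so the ``delicate core'' you defer is not a technicality left for later but the missing idea of the proof.
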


As an example, since we are dealing with even symmetric forms, the following does hold: Nonnegativity of even symmetric octics $(2d = 8)$ and even symmetric decics $(2d = 10)$ can be reduced to semidefinite feasibility problems since, by Timofte's theorem, one has to check whether these forms are nonnegative at all $2$-points. 
Hence, the problem reduces to check whether a finite number of binary forms are nonnegative. By Hilbert's theorem this can be decided by checking whether these
forms are sums of squares.\\

Our main goal is to characterize test sets based on $k$-points that are independent of the degree of the investigated forms (with an arbitrary fixed number of variables). Let $\mathbb R[x_1,\dots,x_n]_{4d}^{S,e}$ be the vector space of \emph{even} symmetric forms in $n$ variables of degree $4d$ and $B$ be the basis given by the power sum polynomials. In the following we always assume that $n \geq 3$ since the question of nonnegativity of binary forms is obvious by Hilbert's theorem. The key idea is to restrict to subspaces of $\mathbb R[x_1,\dots,x_n]_{4d}^{S,e}$ given by forms
\begin{eqnarray}
 p &:=& \sum_{i=1}^{m}\alpha_if_i(x) + \beta M_2^{2d} + \gamma M_{2d}^2 + \delta M_{2d}M_2^d \label{equ:maxdim}
\end{eqnarray}
where $\alpha_i ,\beta, \gamma, \delta \in \R^*$ and $f_i(x) \in B$ for $1\leq i\leq m$. 
In particular, we are interested in the constant
\begin{eqnarray*}
 M_{n,4d}^{(k)} \ := \ \max\{m & : & p\geq 0\Leftrightarrow p\geq 0\,\, \textrm{at all}\,\, k\textrm{-points}, \\ 
  & & \textrm{for all}\,\,p\,\,\textrm{with}\,\,\{f_{1},\dots,f_{m}\}\subseteq B\,\,\textrm{and}\,\,\alpha_i, \beta,\gamma,\delta\in\R^*\}.
\end{eqnarray*}

To the best of our knowledge nothing is known about these numbers so far. Note that $M_{n,4d}^{(k)}$ can be interpreted as a measure for the maximum dimension $m + 3$ such that at all $(m + 3)$-subspaces of forms given as in \eqref{equ:maxdim} (for arbitrary $\{f_{1},\dots,f_{m}\}\subseteq B\}$) nonnegativity can be decided at all $k$-points. Furthermore, we are interested in the set
\begin{eqnarray*}
A_{n,4d}^{(k)} &:=& \{p \in \mathbb R[x_1,\dots,x_n]_{4d}^{S,e}\ : \ p\geq 0\Leftrightarrow p\geq 0\,\, \textrm{at all}\,\, k\textrm{-points}\},
\end{eqnarray*}
i.e., the set of all forms in $\mathbb R[x_1,\dots,x_n]_{4d}^{S,e}$ for which nonnegativity can be decided at all $k$-points. Less is known about geometrical and topological properties of these sets. For example, a priori it is unclear whether these sets are connected or even convex. 

We summarize our upcoming results (Theorems \ref{Thm:Main}, \ref{Thm:Main2}, \ref{thm:maximum}, Corollary \ref{cor:convex}) in the following theorem.

\begin{thm}
\label{thm:mainmain}
Let $p := \sum_{i=1}^{m}\alpha_if_i(x) + \beta M_2^{2d} + \gamma M_{2d}^2 + \delta M_{2d}M_2^d$ with $\alpha_i ,\beta, \gamma, \delta \in \R^*$ and $n \geq 3$. Furthermore, let $B$ be the basis of $\mathbb R[x_1,\dots,x_n]_{4d}^{S,e}$ given by the power sum polynomials.
\begin{enumerate}
\item For $m + 2 \leq n$ and $p$ satisfying some extra conditions (see \eqref{Equ:ClassConditions2}) the set of $(m + 1)$-points is a test set for $p$,
\item For $4d \geq 12$, $n \in \{d - 1, d\}$ and $f_i \in B$ for $1 \leq i\leq m$ we have $M_{n,4d}^{(2)} = 1$,
\item For $4d \geq 12$, $n \in \{d - 1, d\}$ and $f_i \in B$ for $1 \leq i\leq m$ the set $A_{n,4d}^{(2)}$ is not convex.
\end{enumerate}

\end{thm}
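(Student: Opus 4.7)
The theorem collects three separately proven results---Theorems~\ref{Thm:Main2} and~\ref{thm:maximum} together with Corollary~\ref{cor:convex}---so the proof proposal naturally splits into three parts, each attacked with different tools.

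For part~(1), my plan is to extend the four-dimensional base case of Theorem~\ref{Thm:Main}, which handles $m = 1$ and yields $2$-points as a test set when $n \geq 3$, to arbitrary $m$ by induction on the number of free summands. The approach combines the Schur polynomial basis (Proposition~\ref{Prop:SchurPolynomial}) with a Lagrange multiplier analysis of $p$ restricted to the sphere $\{M_2 = 1\}$. At a minimum $x^*$ in the positive orthant, the KKT conditions constrain the distinct coordinate values to satisfy a univariate polynomial equation whose terms are indexed by the power-sum support of $p$; the conditions~\eqref{Equ:ClassConditions2} are designed to keep this equation sparse enough that at most $m + 1$ distinct positive values can occur, while the hypothesis $n \geq m + 2$ guarantees enough slack for the reduction to take place. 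The main obstacle will be controlling the cross terms that appear when several $f_i$ share a common power-sum factor; the exceptional clauses in~\eqref{Equ:ClassConditions2} (``either $j_1,\ldots,j_r \leq 2d$ or $j_2,\ldots,j_r \in \{2,2d\}$'') are introduced precisely to rule out these degeneracies.

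For part~(2), the equality $M_{n,4d}^{(2)} = 1$ splits into two inequalities. The lower bound $M_{n,4d}^{(2)} \geq 1$ is the case $m = 1$ of part~(1): every $4$-dimensional subspace of the prescribed form admits $2$-points as a test set, which is applicable since $n \in \{d-1, d\}$ together with $4d \geq 12$ forces $n \geq 3$. For the upper bound $M_{n,4d}^{(2)} \leq 1$, I would construct, for each admissible pair $(n, 4d)$, an explicit form $p = \alpha_1 f_1 + \alpha_2 f_2 + \beta M_2^{2d} + \gamma M_{2d}^2 + \delta M_{2d} M_2^d$ that is nonnegative at every $2$-point yet attains a negative value at some $3$-point; the template comes from Harris's family of even symmetric forms of degree $\geq 12$ known not to be $2$-point verifiable. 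The delicate step is realizing those examples in the exact basis required by~\eqref{equ:maxdim} with all five coefficients in $\R^*$, which forces a careful tailoring of the pair $f_1, f_2 \in B$ to each $(n, 4d)$.

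For part~(3), non-convexity of $A_{n,4d}^{(2)}$ follows from part~(2) by a simple splitting argument. Given a counterexample $p = \alpha_1 f_1 + \alpha_2 f_2 + \beta M_2^{2d} + \gamma M_{2d}^2 + \delta M_{2d} M_2^d$ with $p \notin A_{n,4d}^{(2)}$, I would write $p = p_1 + p_2$ with $p_1 := \alpha_1 f_1 + \beta M_2^{2d} + \gamma M_{2d}^2 + \delta M_{2d} M_2^d$ and $p_2 := \alpha_2 f_2$. The summand $p_2$ lies in $A_{n,4d}^{(2)}$ trivially, because $f_2$ is a product of even power sums hence nonnegative on $\R^n$: if $\alpha_2 > 0$ then $p_2 \geq 0$ globally, while if $\alpha_2 < 0$ then $p_2$ is negative on some $2$-point, so in both cases the biconditional defining $A_{n,4d}^{(2)}$ holds vacuously. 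The summand $p_1$ lies in $A_{n,4d}^{(2)}$ by part~(1) applied with $m = 1$, provided the counterexample in part~(2) is arranged so that $f_1$ satisfies the $m = 1$ version of~\eqref{Equ:ClassConditions2}. Since $A_{n,4d}^{(2)}$ is closed under positive scaling, $\tfrac{1}{2}p = \tfrac{1}{2}(p_1 + p_2) \notin A_{n,4d}^{(2)}$, contradicting convexity; the one delicate point is making sure part~(2)'s counterexample construction can indeed be carried out with an $f_1$ compatible with~\eqref{Equ:ClassConditions2}.
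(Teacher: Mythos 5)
Your plan for part (1) takes a genuinely different route from the paper, and as sketched it would not close the argument. You propose a Lagrange/KKT analysis in which the stationarity condition $\nabla p(y)=\mu\nabla M_2(y)$ at a constrained minimum forces each coordinate $y_i$ to satisfy a univariate polynomial equation, and you want \eqref{Equ:ClassConditions2} to keep that equation sparse enough to have at most $m+1$ distinct positive roots. But the number of monomials in $\partial p/\partial x_i$, viewed as a polynomial in $x_i$, is governed by the number of \emph{distinct power-sum indices} appearing across all the $f_i$ (which can grow with $\sum r_i$ and with $d$), not by the number $m$ of extra basis elements; a Descartes-type bound would give something closer to Riener's $d$-point estimate, not the claimed $(m+1)$-point test set. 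The paper's proof of Theorem~\ref{Thm:Main2} instead goes through Lemma~\ref{Lem:Jacobi2}: one studies the $n\times(m+1)$ Jacobian $J(y)$ of the tuple of generators, shows via the Vandermonde-times-Schur factorization \eqref{Equ:SchurFactorization2} of its $(m\times m)$-minors that $\rank J(y)<m$ precisely when $y$ is an $(m-1)$-point, and then, for a hypothetical non-$(m-1)$-point minimizer $y$, perturbs to $q:=p+\lambda M_2^{2d}$, deduces from the one-dimensionality of $\ker J(y)$ that $q\propto T_y$ in \eqref{equ:kern}, and applies Lemma~\ref{lem:sphere} to produce a $2$-point $z$ with the same $(M_2,M_{2d})$-values and hence $p(z)=-\lambda<0$. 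Neither the Jacobian rank criterion, nor the kernel form $T_y$, nor the sphere lemma appears in your sketch. You also quote ``either $j_1,\ldots,j_r\leq 2d$ or $j_2,\ldots,j_r\in\{2,2d\}$''---but that is \eqref{Equ:ClassConditions}, the four-dimensional base case. The actual hypothesis \eqref{Equ:ClassConditions2} is that the index set $\varPsi$ be \emph{identically oriented ordered}, which is what ensures all nonzero Schur terms in the minor \eqref{Equ:PrincipalMinor2} carry the same sign; your proposal does not engage with this.

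Parts (2) and (3) are essentially correct and close to the paper's route, with minor variations. For the lower bound in (2), the decisive point is that $n\in\{d-1,d\}$ forces every power sum $M_j$ in the basis $B$ to have $j\leq 2n\leq 2d$, so that \eqref{Equ:ClassConditions} holds automatically for the single extra summand; saying merely that $n\geq 3$ undersells the argument. Your upper-bound construction (Harris-style, using $M_{2d-2}$) matches the paper's in spirit. For (3), you split $p=p_1+p_2$ with $p_2=\alpha_2 f_2$ alone and argue it lies in $A_{n,4d}^{(2)}$ by a sign-of-$\alpha_2$ case analysis, whereas the paper writes $p=\tfrac12 p_1+\tfrac12 p_2$ with both $p_1$ and $p_2$ retaining the three fixed power sums plus one extra term, so that both fall directly under Theorem~\ref{Thm:Main}. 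Either decomposition works; the paper's avoids the side argument for $p_2$.
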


\section{Subspaces of Even Symmetric Forms of Dimension Four}
\label{Sec:DimensionFour}

We start with the study of some $4$-dimensional subspaces. The main result in this section is the following theorem.
\begin{thm}
Let $n \geq 3$. The set of $2$-points is a test set for real even symmetric forms of the form 
\begin{eqnarray}
	p & := & \alpha M_{j_1}^{k_1} \cdots M_{j_r}^{k_r} + \beta M_2^{2d} + \gamma M_{2d}^2 + \delta M_{2d}M_2^d, \label{Equ:OurClass}
\end{eqnarray}
where $\alpha,\beta,\gamma,\delta \in \R^*$ and the following conditions are satisfied
\begin{eqnarray}
	& & j_1,\ldots,j_r \in 2\N, \ k_1,\ldots,k_r \in \N, \ \sum_{i = 1}^r j_i k_i = 4d, j_1 \notin \{2,2d\}, \label{Equ:ClassConditions} \\
	& & \text{and either } j_1,\ldots,j_r \leq 2d \text{ or } j_2,\ldots,j_r \in \{2,2d\}. \nonumber
\end{eqnarray}
\label{Thm:Main}
\end{thm}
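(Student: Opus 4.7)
The plan is to argue by contradiction: assuming $p \ge 0$ at every $2$-point yet $p(x^0) < 0$ somewhere, I will produce a $2$-point at which $p$ is negative. Since $p$ is even and homogeneous, substitute $y_i = x_i^2$ and write $\tilde p(y) := p(\sqrt{y_1},\dots,\sqrt{y_n})$, a symmetric polynomial of degree $2d$ in $y \ge 0$ which, with $N_l(y) := \sum_i y_i^l = M_{2l}(x)$, takes the form
\[
\tilde p \;=\; \alpha \tilde f_1 + \beta N_1^{2d} + \gamma N_d^2 + \delta N_d N_1^d,\qquad \tilde f_1 := \prod_{s=1}^{r} N_{j_s/2}^{k_s}.
\]
By homogeneity it suffices to study $\tilde p$ on the compact simplex $\Delta := \{y \in \R_{\ge 0}^n : N_1(y)=1\}$, on which it attains a minimum at some $y^*$; I assume $\tilde p(y^*) < 0$. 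Setting $T := \{i : y_i^* > 0\}$, the goal reduces to showing $\#\{y_i^* : i \in T\} \le 2$, for then $x^* := (\sqrt{y_1^*},\dots,\sqrt{y_n^*})$ is a $2$-point with $p(x^*) < 0$, contradicting the hypothesis.

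For this, I would restrict to the relatively open face of $\Delta$ containing $y^*$ and apply Lagrange multipliers to obtain $\partial \tilde p/\partial y_i = \lambda$ for each $i \in T$. Writing $\tilde p = \tilde P(N_1,\dots,N_d)$ and applying the chain rule,
\[
\frac{\partial \tilde p}{\partial y_i} \;=\; \sum_l l\, A_l\, y_i^{l-1},\qquad A_l := \frac{\partial \tilde P}{\partial N_l}\bigg|_{y^*},
\]
so the distinct positive coordinates of $y^*$ are positive roots of $R(y) := \sum_l l A_l y^{l-1} - \lambda$, and it suffices to prove $R$ has at most two positive roots. The nonzero $A_l$ can only occur for $l \in \{1,d\} \cup \{j_s/2 : 1 \le s \le r\}$, and for $l \notin \{1,d\}$ the only contribution comes from $\alpha \tilde f_1$, giving
\[
A_l \;=\; \alpha \Big(\!\!\sum_{s:\,j_s/2=l}\!\! k_s\Big)\frac{\tilde f_1(y^*)}{N_l(y^*)},
\]
a quantity of uniform sign $\sign(\alpha)$ since $\tilde f_1(y^*), N_l(y^*) > 0$ (using $y^* \ne 0$).

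Ordered by increasing exponent, the nonzero coefficients of $R$ consist of a constant term $A_1 - \lambda$, a block of ``middle'' coefficients of common sign $\sign(\alpha)$ coming from the exponents $j_s/2 - 1$ with $j_s \notin \{2,2d\}$, and the term $d A_d$ at exponent $d-1$. The two alternatives in \eqref{Equ:ClassConditions} ensure that this sign sequence has the schematic shape $[\,\ast,\ \epsilon,\dots,\epsilon,\ \ast\,]$ with $\epsilon = \sign(\alpha)$: in Case 1 ($j_s \le 2d$ for all $s$) the middle block lies strictly between the constant and leading exponents, while in Case 2 ($j_s \in \{2,2d\}$ for $s \ge 2$) only one middle exponent $j_1/2 - 1$ appears, which may however exceed $d-1$ if $j_1 > 2d$, in which case the final $\ast$ becomes $d A_d$ and the leading term is itself $\epsilon$-signed. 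In either subcase the sequence admits at most two sign changes, so Descartes' rule of signs yields at most two positive roots of $R$, completing the argument. The main technical obstacle is ensuring the robustness of this sign analysis under degenerations---some $A_l$ vanishing, $A_1 = \lambda$, or $y^*$ lying on a low-dimensional face of $\Delta$---but in each such case the number of sign changes can only decrease, and the positivity of $\tilde f_1(y^*)$ and $N_l(y^*)$ is preserved by the normalization $N_1(y^*) = 1$.
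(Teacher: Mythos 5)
Your argument is correct, and it takes a genuinely different route from the paper's. The paper argues in the style of Harris: it computes the Jacobian of the four basis forms, factors its $(3\times 3)$-minors as a Vandermonde times a Schur polynomial (whose positivity on the open positive orthant is guaranteed by the nonnegativity of Kostka numbers) to conclude that the Jacobian has rank $<3$ exactly at $2$-points; it then observes that the perturbed form $q = p + \lambda M_2^{2d}$ is forced to lie in the one-dimensional kernel and hence equals a multiple of $(\overline{M_2}^d M_{2d}-\overline{M_{2d}}M_2^d)^2$, so $q$ vanishes on the whole level set $\{M_{2d}=\overline{M_{2d}}\}\cap\Sph^{n-1}$; a separate ``sphere lemma'' then produces a $2$-point in that level set, yielding the contradiction. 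Your route is instead in the spirit of Riener's elementary proof of the half-degree principle: substitute $y_i=x_i^2$, take the minimizer $y^*$ on the simplex, apply the Lagrange/KKT condition on the open face containing $y^*$, and observe that the distinct positive coordinates of $y^*$ are zeros of the univariate polynomial $R(t)=\sum_l lA_l t^{l-1}-\lambda$; the hypotheses \eqref{Equ:ClassConditions} force the nonzero coefficients other than the constant and the $t^{d-1}$ term to share the sign of $\alpha$, so Descartes bounds the positive roots by two, showing directly that the minimizer is a $2$-point. This bypasses both auxiliary lemmas of the paper: you do not need the kernel/rank characterization because you locate the $2$-point at the minimizer itself rather than elsewhere on a level set, and you do not need the Schur-polynomial machinery because the same Chebyshev-type fact (that $\{1,t^{j_1/2-1},t^{d-1}\}$, with an $\alpha$-signed middle block, admits few sign changes) is packaged via Descartes' rule. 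The paper's Jacobian/Schur route has the advantage that it generalizes cleanly to the higher-dimensional subspaces of Theorem \ref{Thm:Main2}, which is why the authors set it up that way; your argument is more elementary and self-contained for the four-dimensional case, and in fact isolates more transparently where each part of condition \eqref{Equ:ClassConditions} enters. One small point worth making explicit: $R\not\equiv 0$ because the coefficient at exponent $j_1/2-1$ equals $\alpha$ times a strictly positive quantity, so the Descartes argument is never vacuous.
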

 Hence, we have the following corollary.

\begin{cor}
 Let $p$ be of the form \eqref{Equ:OurClass} satisfying \eqref{Equ:ClassConditions}. Then nonnegativity of $p$ can be reduced to a finite number of semidefinite feasibility problems.
\end{cor}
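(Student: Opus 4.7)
The plan is to combine Theorem \ref{Thm:Main} with Hilbert's theorem on binary forms and the standard correspondence between sum of squares certificates and semidefinite programs.

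First, I would invoke Theorem \ref{Thm:Main} to replace the condition ``$p\geq 0$ on $\R^n$'' by the condition ``$p(x)\geq 0$ for every $x\in \Omega_2^+$''. Since $p$ is even symmetric, every evaluation point can be taken (up to permutation) of the form
\begin{eqnarray*}
 x \ = \ (\underbrace{a,\ldots,a}_{i},\underbrace{b,\ldots,b}_{j},\underbrace{0,\ldots,0}_{n-i-j}),
\end{eqnarray*}
where $a,b\in\R$ and $0\leq i,j$ with $i+j\leq n$. The number of such patterns $(i,j)$ is finite and bounded by $\binom{n+2}{2}$.

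Next, for each fixed pattern $(i,j)$, substituting into the power sums gives $M_r(x) = ia^r + jb^r$, so $p$ restricted to this pattern is a polynomial $q_{i,j}(a,b)$ in two real variables. Moreover, since $p$ is homogeneous of degree $4d$ and its monomials in $a,b$ are even (because every $j_i$ and the exponents in $M_2^{2d},M_{2d}^2,M_{2d}M_2^d$ are even), $q_{i,j}$ is an even binary form of degree $4d$ in $(a,b)$. If $j=0$ (or $i=0$) the form degenerates to an even univariate polynomial, which is still a (univariate) binary form after homogenization. Hence $p\geq 0$ on $\R^n$ if and only if each of the finitely many binary forms $q_{i,j}$ is nonnegative on $\R^2$.

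Finally, by Hilbert's theorem every nonnegative binary form is a sum of squares of polynomials; equivalently, $q_{i,j}\geq 0$ on $\R^2$ if and only if there exists a positive semidefinite Gram matrix for $q_{i,j}$ in the monomial basis of binary forms of degree $2d$. Deciding the existence of such a Gram matrix is a semidefinite feasibility problem of fixed size depending only on $d$. Since there are only finitely many pairs $(i,j)$ to check, nonnegativity of $p$ reduces to a finite number of semidefinite feasibility problems, which is exactly the claim. The verification that each $q_{i,j}$ really is a binary form of bounded degree is the only minor technical point; everything else is an immediate consequence of Theorem \ref{Thm:Main} and Hilbert's theorem.
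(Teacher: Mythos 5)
Your proposal is correct and follows the same route as the paper's own proof: reduce to $2$-points via Theorem \ref{Thm:Main}, observe that symmetry leaves only finitely many binary forms to check, and invoke Hilbert's theorem to convert each nonnegativity check into a sum-of-squares (Gram matrix) semidefinite feasibility problem. Your explicit enumeration of the patterns $(i,j)$ merely spells out the finiteness the paper leaves implicit, so there is nothing to add.
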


For an introduction to semidefinite programming see, e.g., \cite{Blekherman:Parrilo:Thomas, Laurent:Survey}. Note that, in particular, semidefinite programs can be solved in time polynomial up to an additive $\varepsilon$-error.

\begin{proof}
 By Theorem \ref{Thm:Main}, $p$ is nonnegative if and only if it is nonnegative at all $2$-points. Hence, $p$ is nonnegative if and only if a finite number of binary forms are nonnegative. By Hilbert's theorem this is the case if and only if these binary forms are sums of squares, which can be decided by semidefinite programs (see \cite{Lasserre}). 
\end{proof}

Note that the special case of $(n, 2d) = (3,8)$ in the main Theorem \ref{Thm:Main} is considered by Harris in \cite{Harris}.
In order to prove this theorem we need some further results that follow a similar line as the results in \cite{Harris}. For given $p$ of the form \eqref{Equ:OurClass} satisfying \eqref{Equ:ClassConditions} let
$J(y)$ be the Jacobian of $\{M_{j_1}^{k_1} \cdots M_{j_r}^{k_r}, M_2^{2d},M_{2d}^2, M_{2d}M_2^d\}$ at the point $y$, i.e.,
$$J(y) : \R^4 \to \R^n, \ (\alpha, \beta, \gamma, \delta) \mapsto \left(\frac{\partial p}{\partial x_1}(y), \dots, \frac{\partial p}{\partial x_n}(y)\right)^T.$$ It is a $(n \times 4)$-matrix.

\begin{lemma}
\label{Lem:Jacobi}
 The following does hold for $y \in \R^n_{\geq 0}$: $\rank J(y) < 3$ if and only if $y$ is a $k$-point with $k\leq 2$.
\end{lemma}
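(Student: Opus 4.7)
My plan is to compute the four columns of $J(y)$ explicitly, observe that the fourth column is always a linear combination of the second and third, and then translate the remaining rank question into a count of positive roots of a univariate polynomial, to which Descartes' rule of signs will apply. First I would compute the partial derivatives: $\partial_i(M_{j_1}^{k_1}\cdots M_{j_r}^{k_r})=f(y_i)$ with $f(t):=\sum_{s=1}^r c_s t^{j_s-1}$ and $c_s:=k_s j_s M_{j_s}^{k_s-1}\prod_{u\ne s}M_{j_u}^{k_u}$; $\partial_i M_2^{2d}=4dM_2^{2d-1}y_i$; $\partial_i M_{2d}^2=4dM_{2d}y_i^{2d-1}$; and $\partial_i(M_{2d}M_2^d)=2dM_2^d y_i^{2d-1}+2dM_{2d}M_2^{d-1}y_i$. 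Assuming $y\ne 0$ (if $y=0$ then $J(0)=0$ and the lemma is vacuous), every power sum $M_j(y)$ is strictly positive, so each $c_s>0$, and a direct check gives column $4 = \tfrac{M_{2d}}{2M_2^d}\cdot\text{(col.\ 2)}+\tfrac{M_2^d}{2M_{2d}}\cdot\text{(col.\ 3)}$. Therefore $\rank J(y)=\rank[\text{col.\ 1},\text{col.\ 2},\text{col.\ 3}]$, and the $i$-th row of this submatrix depends on $i$ only through $y_i$.

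The ``$k\le 2\Rightarrow \rank<3$'' direction is immediate: if $y$ is a $k$-point with $k\le 2$, then $y_i$ takes at most three values ($0$ and at most two positives), so the rows contain at most two distinct nonzero vectors plus possibly the zero row, giving $\rank\le 2$. For the converse, $\rank J(y)<3$ forces cols 1--3 to be linearly dependent, so for some $(\alpha,\beta,\gamma)\ne 0$ the polynomial $P(t):=\alpha f(t)+4d\beta M_2^{2d-1}t+4d\gamma M_{2d}t^{2d-1}$ vanishes at every $y_i$. Every exponent in $P$ is odd, so $P(t)=t\cdot Q(t^2)$ where $Q(u):=\alpha\sum_{s=1}^r c_s u^{(j_s-2)/2}+4d\beta M_2^{2d-1}+4d\gamma M_{2d}u^{d-1}$; positive roots of $P$ correspond bijectively to positive roots of $Q$, so it suffices to show $Q$ has at most two positive roots.

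I would finish by Descartes' rule applied to $Q$, splitting on the alternatives in \eqref{Equ:ClassConditions}. If $j_2,\ldots,j_r\in\{2,2d\}$, every term of $Q$ has exponent in $\{0,(j_1-2)/2,d-1\}$, and $(j_1-2)/2$ is distinct from $0$ and $d-1$ because $j_1\notin\{2,2d\}$; hence $Q$ is a trinomial with at most two sign changes. If instead $j_s\le 2d$ for every $s$, the exponents $(j_s-2)/2$ all lie in $\{0,1,\ldots,d-1\}$; after collecting, $u^0$ and $u^{d-1}$ carry coefficients $\alpha C_0+4d\beta M_2^{2d-1}$ and $\alpha C_{d-1}+4d\gamma M_{2d}$ with $C_0,C_{d-1}\ge 0$, while every interior exponent in $\{1,\ldots,d-2\}$ carries a coefficient of the form $\alpha c_s$ with $c_s>0$. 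After WLOG assuming $\alpha\ge 0$, the interior block has constant sign, and $j_1\notin\{2,2d\}$ guarantees that $s=1$ contributes an interior term, so $Q\not\equiv 0$. A coefficient sequence with a constant-sign interior block has at most two sign changes, yielding at most two positive roots; the degenerate case $\alpha=0$ reduces $Q$ to a binomial in $u$ with at most one positive root. The main obstacle I foresee is precisely this Descartes bookkeeping in the subcase $j_s\le 2d$: one must track exponent coincidences with the boundary values $0$ and $d-1$ in every configuration, and the hypothesis $j_1\notin\{2,2d\}$ functions as the essential anchor ensuring a nonvanishing interior term that both keeps $Q$ nonzero and caps the sign-change count at two.
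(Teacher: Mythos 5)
Your proof is correct, and it takes a genuinely different route from the paper. The paper works with the leading principal $(3\times 3)$-minor, factors each determinant $q_i$ as a Vandermonde $\Delta_3$ times a Schur polynomial (cf.~\eqref{Equ:Determinant}--\eqref{Equ:Schur}), and invokes Proposition~\ref{Prop:SchurPolynomial} (positivity of Kostka numbers) to conclude that each nonzero $q_i$ has no zeros in the open positive orthant except on $2$-points; the conditions~\eqref{Equ:ClassConditions} are then used to ensure all the $q_i$ share the same sign so that no cancellation occurs in the sum~\eqref{Equ:PrincipalMinor}. You instead observe that rank deficiency of the three essential columns is equivalent to the existence of a single odd univariate polynomial $P(t)$ vanishing at every $y_i$, substitute $P(t)=t\,Q(t^2)$, and bound the positive roots of $Q$ by Descartes' rule of signs; the conditions~\eqref{Equ:ClassConditions} now serve to cap the sign-change count of $Q$'s coefficient sequence at two (trinomial in one alternative, constant-sign interior block in the other). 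The two approaches are closely related underneath---the classical positivity of Schur polynomials on the positive orthant can itself be proved via generalized Vandermonde determinants and Descartes---but your argument avoids symmetric-function machinery entirely and is arguably more self-contained; it also has the cleaner feature of handling all $n$ coordinates at once via roots of $P$ rather than reducing to triples by symmetry. One small wording point: at $y=0$ the lemma is not vacuous but simply holds trivially, since $0$ is a $2$-point and $J(0)$ has rank $0$.
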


\begin{proof}
 First, we prove the lemma for the case that $r = 1$, i.e., the first column of $J$ is given by the partial derivatives of $M_{j_1}^{k_1}$ with $j_1k_1 = 4d$, $j_1 \in 2\N$ and $j_1 \notin \{2,2d\}$ (see \eqref{Equ:ClassConditions}). Thus, the Jacobian $J$ is given by the following matrix
\begin{eqnarray*}
	J =   \left[ \begin {array}{cccc} 
k_1j_1 M_{j_1}^{k_1 - 1} x_1^{j_1-1} & 4\,dx_1M_2^{2d-1} & 4\,dx_1^{2d-1}M_{{2d}} & 2\,dx_1M_2^{d-1} \left( M_{{2d
}}+x_1^{2d-2}M_{{2}} \right) \\ 
 \vdots & \vdots & \vdots & \vdots \\
\noalign{\medskip} k_1j_1 M_{j_1}^{k_1 - 1} x_n^{j_1-1} & 4\,d x_n M_2^{2d-1} & 4\,dx_n^{2d-1}M_{{2d}} & 2\,d x_nM_2^{d-1} \left( M_{{2d}}+x_n^{2d-2}M_{{2}} \right) 
\end {array} \right]. 
\end{eqnarray*}

We investigate all $(3\times 3)$-minors of $J$. Due to the symmetry of $p$ (and therefore also $J$) in the variables $x_1,\ldots,x_n$ it suffices to restrict to $x_1,x_2,x_3$. Note that every $(3 \times 3)$-minor containing the fourth column of $J$ is irrelevant, since the fourth column is in the span of the second and the third column. Hence, if there exists a nonzero $(3 \times 3)$-minor containing the fourth column, then there also exists a nonzero $(3 \times 3)$-minor containing the first three columns. Thus, it only remains to investigate the leading principal $(3 \times 3)$-minor of $J$, which is due to calculation rules of determinants given by

$$ (4d)^3M_{j_1}^{k_1 - 1}M_{2d}M_{2}^{2d-1} q(x_1,x_2,x_3)$$ with

\begin{eqnarray}
	q(x_1,x_2,x_3) & :=	& 
	\det \left[ \begin {array}{ccc}
		x_1^{j_1-1} & x_1 & x_1^{2d-1} \\
		x_2^{j_1-1} & x_2 & x_2^{2d-1} \\
		x_3^{j_1-1} & x_3 & x_3^{2d-1} \\
	\end {array} \right]. \label{equ:matrix}
\end{eqnarray}
Note that $q$ does not equal the zero polynomial, since $j_1 \notin \{2,2d\}$ by assumption. Obviously, $q(x_1,x_2,x_3)$ vanishes if one entry is zero and, by \eqref{Equ:Determinant}, \eqref{Equ:Vandermonde} and \eqref{Equ:Schur}, we have 
\begin{eqnarray*}
	q(x_1,x_2,x_3) & =	& \Delta_3 \cdot (\pm 1) \cdot S_{(d_1,d_2,d_3)},
\end{eqnarray*}
with $(d_1,d_2,d_3) = (j_1-3,2d-2,1)$ for $j_1 > 2d$ and $(d_1,d_2,d_3) = (2d - 3,j_1 - 2,1)$ for $j_1 < 2d$. Since $\Delta_3 = (x_1-x_2)(x_1-x_3)(x_2-x_3)$, $q(x_1,x_2,x_3)$ vanishes if two entries are equal or if any entry is zero (since in this case the matrix in \eqref{equ:matrix} is singular). By Proposition \ref{Prop:SchurPolynomial} $q(x_1,x_2,x_3)$ has no further zeros on $\R^{3}_{> 0}$, because $S_{(d_1,d_2,d_3)}$ is a sum of monomial symmetric functions with nonnegative coefficients (the Kostka-numbers) and therefore $S_{(d_1,d_2,d_3)}(y) > 0$ for every $y \in \R^{3}_{> 0}$.

Since finally $M_{j_1}^{k_1 - 1}$, $M_{2d}$ and $M_{2}^{2d-1}$ are sums of squares, the leading principal  $(3 \times 3)$-minor of $J$ does not vanish for a $3$-point $y \in \mathbb R^3_{> 0}$. Hence, the minor vanishes if and only if one of $\{y_1, y_2, y_3\}$ is zero or at least two of them are equal, which is exactly the case if and only if $(y_1,y_2,y_3)$ is a $2$-point.

But this already implies that the rank of $J$ is less than three if and only if $y = (y_1,\ldots,y_n)$ is a 2-point. Assume that $J(y)$ has rank three. Then there exists a  
non-vanishing $(3 \times 3)$-minor of $J(y)$ given by the first three columns and three rows $i_1,i_2$ and $i_3$. Hence, by the upper argumentation, we have $y_{i_1} > y_{i_2} > y_{i_3} > 0$, i.e., $y$ is not a $2$-point. On the other hand, assume that $J(y)$ has rank two. Then every $(3 \times 3)$-minor of $J(y)$ given by the first three columns and three arbitrary rows $i_1,i_2$ and $i_3$ vanishes, i.e., by the upper argumentation $(y_{i_1},y_{i_2},y_{i_3})$ is a $2$-point. And since $\{i_1,i_2,i_3\}$ is an arbitrary subset of cardinality three of $\{1,\ldots,n\}$, we can conclude that $y$ is a $2$-point in total.\\

Now, we step over to the general case. Here, the first column of $J$ is given by the partial derivatives of $M_{j_1}^{k_1} \cdots M_{j_r}^{k_r}$ satisfying \eqref{Equ:ClassConditions}, i.e., the first column is given by
\begin{eqnarray*}
	\left(\sum_{i = 1}^r k_i j_i x_1^{j_i - 1} M_{j_i}^{k_i-1} \prod_{l \in \{1,\ldots,r\} \setminus \{i\}}M_{j_l}^{k_l}, \ \ldots, \ \sum_{i = 1}^r k_i j_i x_n^{j_i - 1} M_{j_i}^{k_i-1} \prod_{l \in \{1,\ldots,r\} \setminus \{i\}}M_{j_l}^{k_l} \right)^T.
\end{eqnarray*}
With the same argument as in the case $r = 1$, it suffices to investigate the leading principal $(3 \times 3)$-minor. By the calculation rules of the determinant this minor is given by
\begin{eqnarray}
	(4d)^2 M_{2d}M_{2}^{2d-1} \left(\sum_{i = 1}^r k_i j_i q_i(x_1,x_2,x_3) M_{j_i}^{k_i-1} \prod_{l \in \{1,\ldots,r\} \setminus \{i\}}M_{j_l}^{k_l}\right), \label{Equ:PrincipalMinor}
\end{eqnarray}
where
\begin{eqnarray}
	q_i(x_1,x_2,x_3) & :=	& 
	\det \left[ \begin {array}{ccc}
		x_1^{j_i-1} & x_1 & x_1^{2d-1} \\
		x_2^{j_i-1} & x_2 & x_2^{2d-1} \\
		x_3^{j_i-1} & x_3 & x_3^{2d-1} \\
	\end {array} \right] \ = \ \Delta_3 \cdot (\pm 1) \cdot S_{(d_1,d_2,d_3)},  \label{Equ:SchurFactorization}
\end{eqnarray}
with $(d_1,d_2,d_3) = (j_i-3,2d-2,1)$ for $j_i > 2d$ and $(d_1,d_2,d_3) = (2d - 3,j_i - 2,1)$ for $j_i < 2d$. Since all $j_i$ are even numbers (see \eqref{Equ:ClassConditions}) all $M_{j_i}$ are sums of squares, which, due to symmetry in the variables, only vanish at the origin. Hence, the zero set of \eqref{Equ:PrincipalMinor} only depends on the $q_i$ polynomials. Note that $q_i$ is the zero polynomial if and only if $j_i \in \{2,2d\}$. With the same argument as in the case $r = 1$ we know furthermore that all $q_i \neq 0$ vanish at $(y_1,y_2,y_3) \in \R_{\geq 0}^3$ if and only if $(y_1,y_2,y_3)$ is a 2-point. Hence, we are done if we can show that there exists a $q_i \neq 0$ and all $q_i$ have the same signum. But this follows from the conditions \eqref{Equ:ClassConditions}. They guarantee that $q_1 \neq 0$ and either all other $q_i = 0$ (and thus the signum of $q_1$ does not matter) or all $j_i \leq 2d$, which implies that the number of column changes needed to transform the defining matrix of each $q_i$ to the standard form \eqref{Equ:Determinant} is equal for all $i$ and thus the signum of all $q_i$ coincides.

Thus, the principal $(3 \times 3)$-minor indeed vanishes if and only if $(y_1,y_2,y_3) \in \R_{\geq 0}^3$ is a 2-point and analogously as in the case $r = 1$ this implies that the rank of $J$ is less than three if and only if $(y_1,\ldots,y_n)$ is a 2-point.
\end{proof}

If $y$ is not a $2$-point Lemma \ref{Lem:Jacobi} says that the solution space of $J(y)\cdot v = 0$ where $v := (\alpha, \beta, \gamma, \delta)$ is $1$-dimensional and in fact is obviously
 spanned by the following form that is clearly singular at $y$:
\begin{eqnarray}
 T_{y}(x) &:=& (\overline{M_2}^dM_{2d}(x) - \overline{M_{2d}}M_2(x)^d)^2, \label{equ:kern}
\end{eqnarray}
where $\overline{M_r} := M_r(y)$.

As a next step we prove that any sum of $2k$-th powers on the unit sphere can be formed by a $2$-point. This generalizes Lemma 2.6 in \cite{Harris}, where this is shown to be true for $2k = 4$. However, the proof follows the same line.

\begin{lemma}
\label{lem:sphere}
 Let $x \in \mathbb R^n_+$ be such that $M_2(x) = 1$ and $M_{2k}(x) = r$. Then there exists a $2$-point $z = (a,\dots,a,b)\in\mathbb R^n_+$ such that 
$M_{2}(z) = 1$ and $M_{2k}(z) = r$.
\end{lemma}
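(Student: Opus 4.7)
The plan is to reduce the existence statement to an intermediate value argument applied to a one-parameter family of $2$-points of the prescribed shape $(a,\ldots,a,b)$.

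First I would locate the interval in which $r$ must lie. Since $x_i \geq 0$ and $M_2(x) = 1$, every coordinate lies in $[0,1]$, hence $x_i^{2k} \leq x_i^2$ and $r \leq 1$. Applying Jensen's inequality to the convex function $t \mapsto t^k$ on the values $x_1^2,\ldots,x_n^2$ yields $r = M_{2k}(x) \geq n^{1-k}$, so that $r \in [n^{1-k},1]$.

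Next, I would parameterize the candidate family by $b \in [0,1]$, setting $z_b := (a(b),\ldots,a(b),b)$ with $a(b) := \sqrt{(1-b^2)/(n-1)}$; then $M_2(z_b)=1$ by construction, and
\[
 \varphi(b) \;:=\; M_{2k}(z_b) \;=\; (n-1)\left(\frac{1-b^2}{n-1}\right)^{k} + b^{2k}.
\]
The two key evaluations are $\varphi(1) = 1$, corresponding to $a(1) = 0$, and $\varphi(1/\sqrt{n}) = n \cdot n^{-k} = n^{1-k}$, corresponding to the equal-coordinate point $z_{1/\sqrt{n}} = (1/\sqrt{n},\ldots,1/\sqrt{n})$. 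Since $\varphi$ is continuous on $[1/\sqrt{n},1]$ and $r$ lies between these two endpoint values, the intermediate value theorem produces some $b^{\ast} \in [1/\sqrt{n},1]$ with $\varphi(b^{\ast}) = r$; the $2$-point $z := (a(b^{\ast}),\ldots,a(b^{\ast}),b^{\ast})$ then satisfies both $M_2(z) = 1$ and $M_{2k}(z) = r$.

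No serious obstacle arises: the proof reduces to continuity and two endpoint computations once the a priori bounds on $r$ are in place. The case $k = 1$ is degenerate, since then $\varphi \equiv 1$ and $r = 1$ is automatic, so the claim is substantive only for $k \geq 2$, where the intermediate value argument above applies without modification.
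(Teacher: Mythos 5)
Your proof is correct and essentially identical to the paper's: both reduce to an intermediate value argument on the one-parameter family of $2$-points $(a,\dots,a,b)$ with $M_2 = 1$, evaluating at the two extreme points $(0,\dots,0,1)$ and $(1/\sqrt{n},\dots,1/\sqrt{n})$. The only cosmetic difference is that the paper parameterizes this family by an angle $\alpha$ via $(\cos\alpha/\sqrt{n-1},\dots,\cos\alpha/\sqrt{n-1},\sin\alpha)$ rather than by $b$ directly, and it invokes the a priori bounds on $r$ by appealing to equivalence of norms rather than spelling out the Jensen argument as you do.
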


\begin{proof}
 We first note that the inequality $\frac{1}{n^{k-1}} \leq M_{2k}(x) \leq 1$ is true since we are dealing with points $x \in \mathbb R^n_+$ such that $M_2(x) = 1$ and by the equivalence of norms.
Let
\begin{eqnarray*}
 z_{\alpha} &:=& \left(\frac{\cos\alpha}{\sqrt{n-1}},\dots,\frac{\cos\alpha}{\sqrt{n-1}},\sin\alpha\right).
\end{eqnarray*}
Then $f(\alpha) := M_{2k}(z_{\alpha}) = \frac{\cos^{2k}\alpha}{(n-1)^{k-1}} + \sin^{2k}\alpha$. In particular, 
$M_{2}(z_\alpha) = 1$ for all $\alpha$ as well as $f(\frac{\pi}{2}) = 1$ and, since $\cos(\arcsin(x)) = \sqrt{1 - x^2}$, it follows that
\begin{eqnarray*}
&&f\left(\arcsin\left(\frac{1}{\sqrt{n}}\right)\right)\ = \ \frac{(1-1/n)^k}{(n-1)^{k-1}} + \frac{1}{n^k}\ =\ \frac{1}{n^{k-1}}.
\end{eqnarray*}
Hence, by the intermediate value theorem for all $r$ with $\frac{1}{n^{k-1}} \leq r \leq 1$ there exists $\alpha^* \in [\arcsin(\frac{1}{\sqrt{n}}),\frac{\pi}{2}]$
 such that $f(\alpha^*) = r$ and $z_{\alpha^*} = (a,\dots,a,b)$.
\end{proof}

Now, we can prove our main theorem.

\begin{proof}(Theorem \ref{Thm:Main})
 We need to prove that if $\alpha M_{j_1}^{k_1} \cdots M_{j_r}^{k_r} + \beta M_2^{2d} + \gamma M_{2d}^2 + \delta M_{2d}M_2^d$ is nonnegative at all $2$-points, then it is also nonnegative globally. Suppose $p$ is nonnegative at all $2$-points but not nonnegative. Let $-\lambda :=\min_{x\in\mathbb S^{n-1}} p < 0$ denote the minimum value of $p$ over the
unit sphere and let $y = (y_1,\ldots,y_n) \in \mathbb S^{n-1}$ be the minimizer such that $p(y) = -\lambda$ (note that it suffices to restrict to the unit sphere due to homogeneity). Since the degree of every variable in every monomial of $p$ is even, we can assume w.l.o.g. that $y \in \mathbb S^{n-1}_+$.  Then $q(x) := p(x) + \lambda M_2^{2d}(x) \geq 0$
and $q(y) = 0$. By assumption $y$ is not a $k$-point with $k\leq 2$ (because $p$ is nonnegative at these points). By Lemma \ref{Lem:Jacobi} we have $\rank J(y) = 3$ and hence $q = k\cdot T_{y}(x) , k > 0$ with $T_y$ as in \eqref{equ:kern}, since $q$ is in the kernel of $J(y)$. Thus, $q(x) = 0$ whenever $M_{2d} = \overline{M_{2d}}$, i.e.,
$x_1^{2d}+\dots + x_n^{2d} = y_1^{2d}+\dots + y_n^{2d}$.
By Lemma  \ref{lem:sphere} there exists a $2$-point $z = (a,a,\dots,a,b)$ such that $(n-1)a^2 + b^2 = 1$ and $(n-1)a^{2d} + b^{2d} = \overline{M_{2d}}$. But this implies
$p(z) = -\lambda$ which is a contradiction since $p$ is nonnegative at all $2$-points.
\end{proof}

\subsection{An Exemplary Application}
In this subsection we briefly want to demonstrate how our Theorem \ref{Thm:Main} can be applied to test nonnegativity of an example class and even how to derive a computeralgebraically generated semialgebraic description of a certain subcone of the cone of nonnegative even symmetric forms. 

The key fact on an application side is that checking whether forms are nonnegative at $2$-points can be reduced to checking nonnegativity of univariate polynomials, which can be done efficiently by checking numerically (i.e., under usage of SDP-methods; see e.g. \cite{Laurent:Survey} for further details) whether these polynomials are sums of squares (due to Hilbert's theorem). Alternatively this can also be done by using quantifier elimination methods, which happen to work quite efficiently for univariate polynomials of sufficiently low degree.

Our first example shows that the same set of coefficients yields different results concerning nonnegativity when the number of variables increases.\\

\textbf{Example}: Consider the form
\begin{eqnarray*}
p(x_1,x_2,x_3) &:=& M_4^3 - \frac{1}{10}M_2^6 + M_6^2 + M_6M_2^3.
\end{eqnarray*}
By Theorem \ref{Thm:Main}, $p\geq 0$ if and only if the two binary forms $p(x_1,x_2,0)$ and $p(x_1,x_1,x_2)$ are nonnegative. By dehomogenizing the binary forms this is the case if and only if the following two univariate polynomials are nonnegative:

\begin{eqnarray}
 &&\frac{29}{10}\,{x}^{12}+\frac {12}{5}\,{x}^{10}+\frac{9}{2}\,{x}^{8}+2\,{x}^{6}+\frac{9}{2}\,{x}^{4}+\frac {12}{5}\,{x}^{2}+\frac {29}{10} \label{equ:univariat},\\
&&\frac{108}{5}\,{x}^{12}+\frac{24}{5}\,{x}^{10}-2\,{x}^{6}+12\,{x}^{4}+\frac {24}{5}\,{x}^{2}+\frac{29}{10}\nonumber.
\end{eqnarray}

Since these polynomials are obviously nonnegative, we conclude $p\geq 0$. However, consider now the same form in four variables, i.e., 
\begin{eqnarray*}
p(x_1,x_2,x_3,x_4) &:=& M_4^3 - \frac{1}{10}M_2^6 + M_6^2 + M_6M_2^3.
\end{eqnarray*}
By Theorem \ref{Thm:Main}, $p\geq 0$ if and only if the four binary forms $p(x_1,x_2,0,0)$, $p(x_1,x_1,x_2,0)$, $p(x_1,x_1,x_1,x_2)$ and $p(x_1,x_1,x_2,x_2)$ are nonnegative. By dehomogenizing, the first two binary forms are exactly the polynomials in \eqref{equ:univariat} from which we already know that they are nonnegative. Hence, $p$ is nonnegative if and only if the following two univariate polynomials are nonnegative:
\begin{eqnarray*}
 &&{\frac {441}{10}}\,{x}^{12}-{\frac {324}{5}}\,{x}^{10}-{\frac {135}{2}
}\,{x}^{8}-18\,{x}^{6}+{\frac {45}{2}}\,{x}^{4}+{\frac {36}{5}}\,{x}^{
2}+{\frac {29}{10}},\\
&&{\frac {108}{5}}\,{x}^{12}+{\frac {48}{5}}\,{x}^{10}-24\,{x}^{8}-88\,{
x}^{6}-24\,{x}^{4}+{\frac {48}{5}}\,{x}^{2}+{\frac {108}{5}}.
\end{eqnarray*}

It is easy to check that these polynomials are indefinite. Hence, $p$ is not a nonnegative form.\\

Now, we investigate the 4-variate dodecics given by
\begin{eqnarray}
p(x_1,x_2,x_3,x_4) & := & \alpha M_4^3 + \beta M_2^6 + \gamma M_6^2 + M_6M_2^3.\label{equ:bsp}
\end{eqnarray}

It turns out that quantifier elimination methods are not suitable to decide, for which $(\alpha, \beta, \gamma) \in \R^3$ the form $p$ is nonnegative, since the problem is too complex. Here, we used the quantifier elimination package \textsc{SyNRAC} for \textsc{Maple} (see \cite{Anai:Yanami}), which terminated without a solution after round about $18$ minutes.

Anyhow, application of Theorem \ref{Thm:Main} allows to quickly derive a computeralgebraical description of the desired semialgebraic set. We successively apply Theorem \ref{Thm:Main} on polynomials $p$ given by the parameter sets $\{(\alpha,\beta,\gamma) \ : \ \alpha \in \{1,2\}, (\beta,\gamma) \in [-10,10]^2 \cap \Z^2\}$ and $\{(\alpha,\beta,\gamma) \in [-4,4] \cap \Z^3\}$. The nonnegativity region of the corresponding polynomials in the parameter sets are depicted in the three pictures of Figure \ref{Fig:ParameterSets}.

\begin{figure}
\includegraphics[width=0.3\linewidth]{./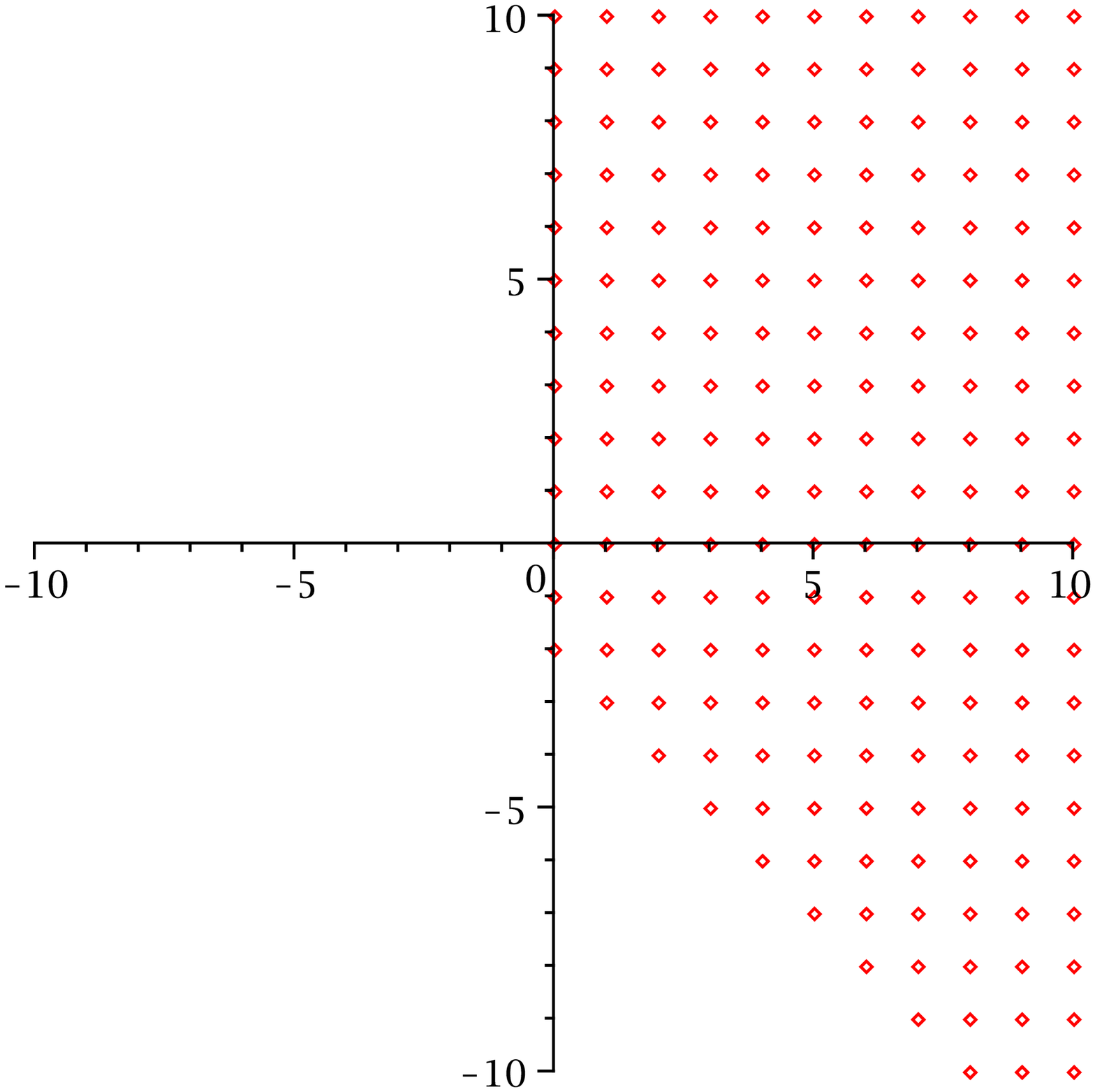}
\includegraphics[width=0.3\linewidth]{./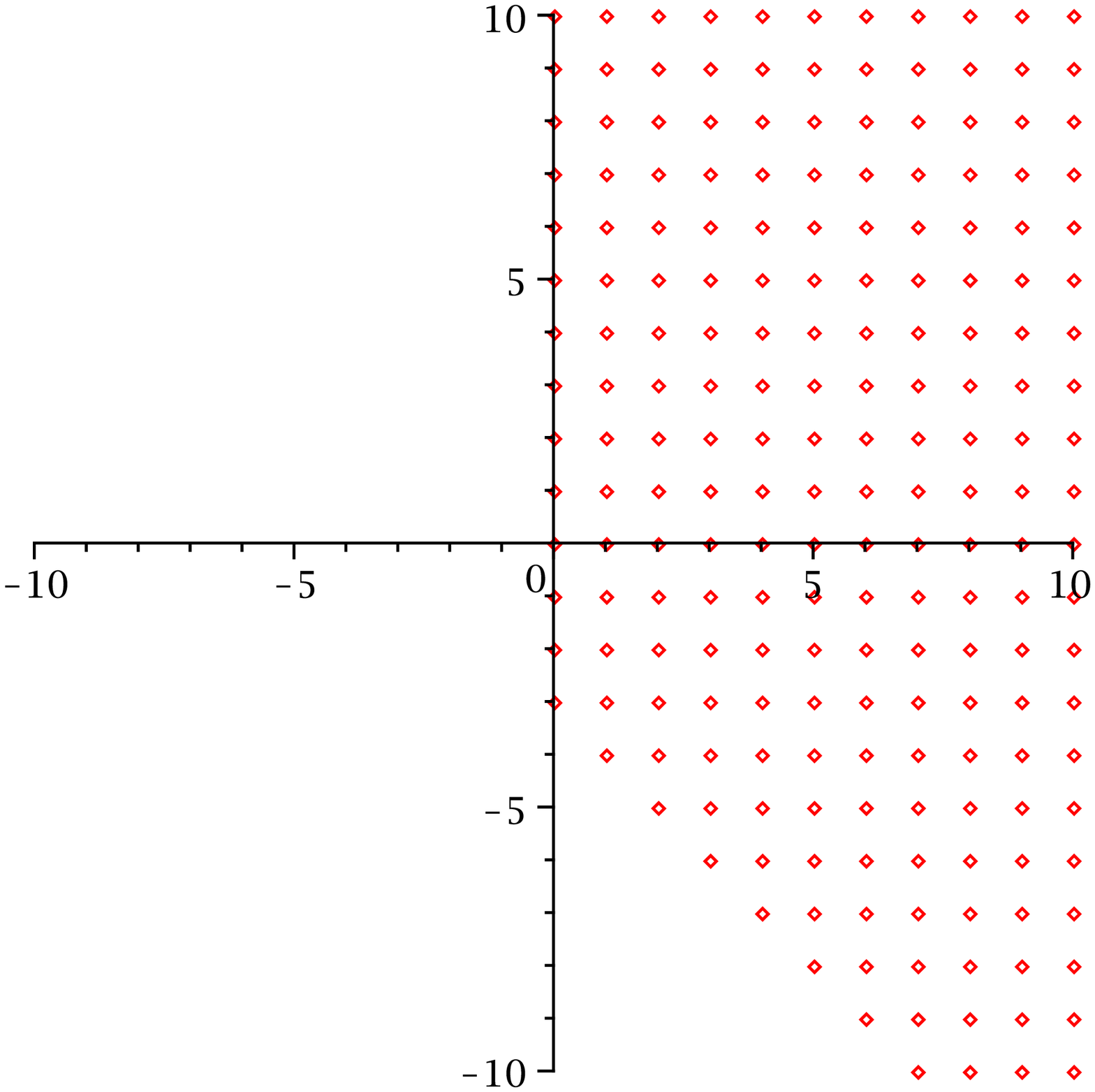}
\includegraphics[width=0.3\linewidth]{./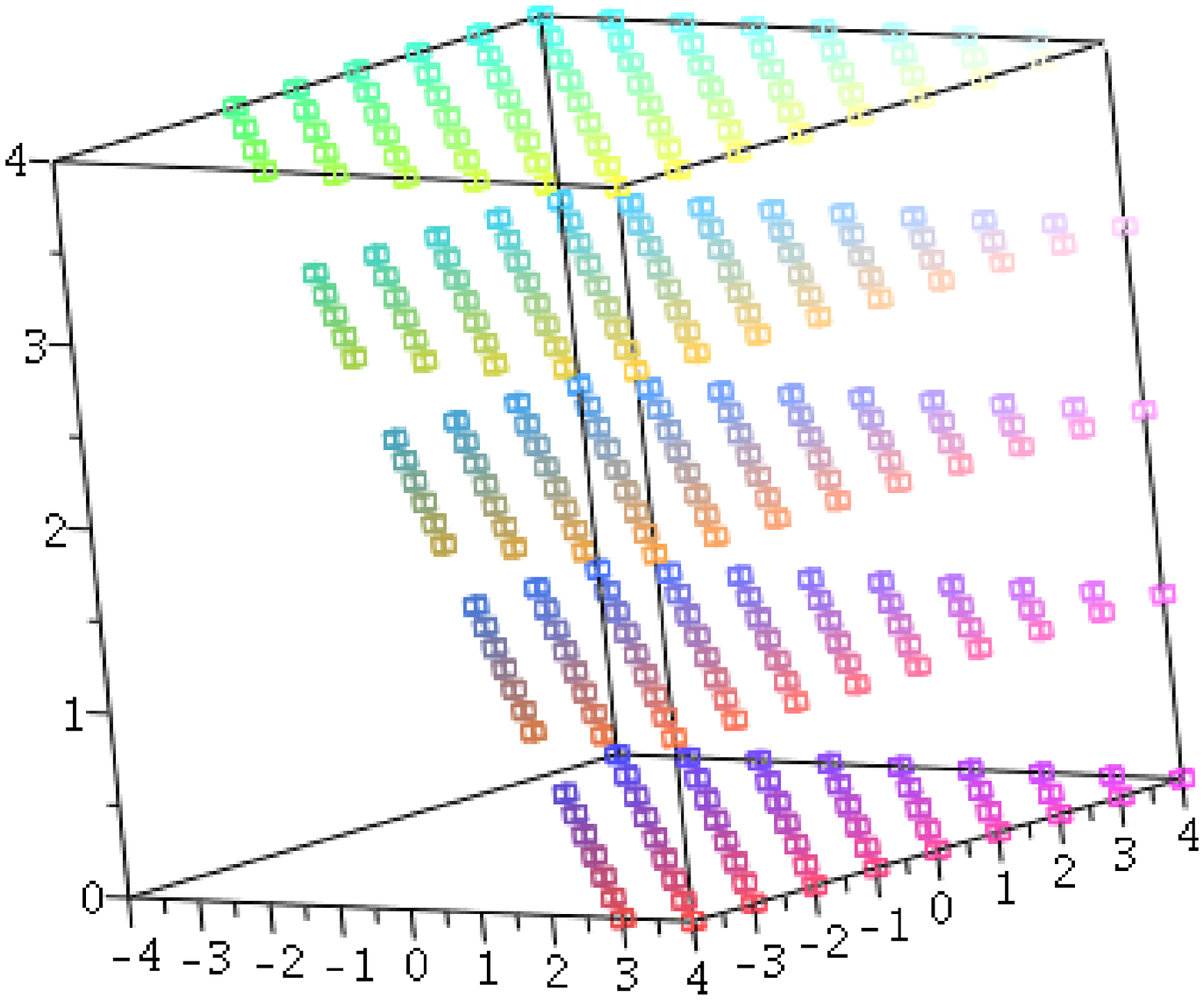}
\caption{The nonnegativity region of polynomials of the form \eqref{equ:bsp} in the parameter sets $\{(\alpha,\beta,\gamma) \ : \ \alpha \in \{1,2\}, (\beta,\gamma) \in [-10,10]^2 \cap \Z^2\}$ and $\{(\alpha,\beta,\gamma) \in [-4,4] \cap \Z^3\}$.}
\label{Fig:ParameterSets}
\end{figure}

The computed region of nonnegativity obviously is polyhedral. In fact, the approximated set of parameters, which yield nonnegative polynomials $p$, can easily be identified as 
$$\{(\alpha,\beta,\gamma) \in \R^3 \ : \ \beta \geq 0, \alpha + \beta + \gamma + 1 \geq 0\}.$$
We furthermore checked with \textsc{SOSTools} (see \cite{Parrilo:SOSTOOLS}) for various examples (e.g., $\alpha = 1$, $\beta = 0$, $\gamma \in \{-1,-2\}$) located on the boundary of the polyhedra described by the upper set, if the corresponding polynomials $p$ are sums of squares. Indeed, this was always the case. Hence, by convexity one would expect that every nonnegative form is a sum of squares in this particular subcone.

\section{Subspaces of Even Symmetric Forms of Arbitrary Dimension}
\label{Sec:arbitrary}
A natural question is in how far the constructions in Section \ref{Sec:DimensionFour} can be generalized to higher dimensional subspaces in the vector space of even symmetric forms of degree $4d$ in $n$ variables. We show that with some obvious modifications such generalizations are indeed possible. However, the price to pay is an adjustment of the number of variables to the ambient dimension of the investigated subspaces of forms of degree $4d$.\\

Before we can introduce the formal setting for this section, we need to give one more definition. Let $V \subset (\N^*)^k$. For every vector $(v_1,\ldots,v_k) \in V$, which is not a $(k-1)$-point, we denote by $\sig_v$ the permutation, which maps $v$ to the unique vector $\sig_v(v)$ with $\sig_v(v_1) > \cdots > \sig_v(v_k)$. For every $v,w \in V$ with $v,w$ not being $(k-1)$-points, we say that $v$ and $w$ are \textit{identically oriented ordered}, if $\sign(\sig_v) \cdot \sign(\sig_w) = 1$. We say that $V$ is \textit{identically oriented ordered}, if every pair $v,w \in V$ with $v,w$ not being $(k-1)$-points, is identically oriented ordered.\\

We consider the following class of even symmetric forms: Let for $m \leq n$

\begin{eqnarray}
	p(x_1,\dots,x_n) & := & \sum_{i=1}^{m-2}\alpha_if_i(x) + \beta M_2^{2d} + \gamma M_{2d}^2 + \delta M_{2d}M_2^d, \label{Equ:OurClass2}
\end{eqnarray}

where $\alpha_1,\ldots,\alpha_m,\beta,\gamma,\delta \in \R^*$ and $f_i(x) = M_{j_{(i,1)}}^{k_{(i,1)}} \cdots M_{j_{(i,r_i)}}^{k_{(i,r_i)}}$ with $r_i \in \N^*$ is a product of power sums $M_j$ of degree $4d$ such that the following conditions (which are a natural generalization of \eqref{Equ:ClassConditions}) hold:
\begin{eqnarray}
	& & j_{(i,1)},\ldots,j_{(i,r_i)} \in 2\N, \ k_{(i,1)},\ldots,k_{(i,r_i)} \in \N, \ \sum_{l = 1}^{r_i} j_{(i,l)} k_{(i,l)} = 4d, \nonumber \\ 
	& & j_{(i,1)} \notin \{2,2d\} \cup \bigcup_{l = 1}^{i-1} \{j_{(l,1)},\ldots,j_{(l,r_l)}\} \text{ for every } 1 \leq i \leq m-2, \text{ and the set } \label{Equ:ClassConditions2} \\
	& & \varPsi \ := \ \{(j_{(1,l_1)}, j_{(2,l_2)}, \ldots,j_{(m-2,l_{m-2})},2,2d) \ : \ 1 \leq l_i \leq r_i \text{ for every } 1 \leq i \leq m-2\} \nonumber \\
	& & \text{is identically oriented ordered.} \nonumber
\end{eqnarray}

Hence, $p$ is an even symmetric form in $n$ variables of degree $4d$ in an $(m+1)$-dimensional subspace of even symmetric forms of degree $4d$. Again, as in Section \ref{Sec:DimensionFour}, we denote the Jacobian of $p$ at the point $y$ by $J(y)$, which is an $n\times (m+1)$-matrix. Note that for $m = 3$ the fact that $\varPsi$ is identically oriented ordered is equivalent to the conditions \eqref{Equ:ClassConditions}. The extension of the conditions \eqref{Equ:ClassConditions2} w.r.t. the conditions \eqref{Equ:ClassConditions} become necessary for a generalization to arbitrary dimensions of the subspace for two reasons: Firstly, we need to guarantee that specific $(m \times m)$-minors of interest in $J(y)$ do not equal the zero polynomial. Recall that we similarly had to guarantee that the investigated leading principal $(3 \times 3)$-minor in the proof of Lemma \ref{Lem:Jacobi} did not equal the zero polynomial. Secondly, in the case that our investigated minor can (by calculation rules of the determinant) be rewritten as a sum of simpler determinants, we need to guarantee that all these determinants have the same signum. Recall that we also already had to do this in the $4$-dimensional case when the first summand was a product of different power sums (see proof of Lemma \ref{Lem:Jacobi}).

\begin{lemma}
\label{Lem:Jacobi2}
 The following does hold for $y \in \R^n_{\geq 0}$: $\rank J(y) < m$ if and only if $y$ is a $k$-point with $k\leq m-1$.
\end{lemma}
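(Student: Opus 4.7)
The plan is to mimic the strategy from Lemma \ref{Lem:Jacobi}, but now analyzing $(m\times m)$-minors of the $n\times (m+1)$ Jacobian $J(y)$. First, I would observe that the identity used in the $m=3$ case still applies: the column of $J(y)$ coming from $M_{2d}M_2^d$ lies in the span of the columns coming from $M_2^{2d}$ and $M_{2d}^2$, since $\partial_i(M_{2d}M_2^d)$ is a scalar combination (depending only on the global quantities $M_2, M_{2d}$) of $\partial_i M_2^{2d}$ and $\partial_i M_{2d}^2$. Hence $\rank J(y)\leq m$, and it suffices to study the $n\times m$ submatrix $\widetilde J(y)$ obtained by deleting this last column. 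By the symmetry of $p$ in $x_1,\dots,x_n$, I would then reduce to examining the leading principal $(m\times m)$-minor $\Delta$ of $\widetilde J(y)$ built from rows $1,\dots,m$.

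Next, applying the product rule to each $\partial_\mu f_i$ and expanding $\Delta$ multilinearly over the first $m-2$ columns, I would express $\Delta$ as
\[
\Delta \;=\; (4d)^{2}\, M_{2d}\,M_2^{2d-1}\sum_{(l_1,\dots,l_{m-2})} C_{l_1,\dots,l_{m-2}}(y)\cdot q_{l_1,\dots,l_{m-2}}(x_1,\dots,x_m),
\]
where $C_{l_1,\dots,l_{m-2}}(y)$ is the positive scalar $\prod_{i} k_{(i,l_i)} j_{(i,l_i)} M_{j_{(i,l_i)}}^{k_{(i,l_i)}-1}\prod_{l'\neq l_i}M_{j_{(i,l')}}^{k_{(i,l')}}$ (all $M_{j}$ being strict sums of even powers, and hence positive off the origin), and $q_{l_1,\dots,l_{m-2}}$ is the $(m\times m)$-determinant in $x_1,\dots,x_m$ with column exponents $(j_{(1,l_1)}-1,\dots,j_{(m-2,l_{m-2})}-1,2d-1,1)$. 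Each such $q$ either vanishes identically (when two of those exponents coincide) or, after a column permutation, factors as $\sign(\sigma)\cdot \Delta_m\cdot S_\lambda$ with $\lambda$ a genuine partition, by the Schur identities \eqref{Equ:Determinant}--\eqref{Equ:Schur}.

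At this point the two refinements of \eqref{Equ:ClassConditions2} relative to \eqref{Equ:ClassConditions} come into play. The assumption $j_{(i,1)}\notin\{2,2d\}\cup\bigcup_{l<i}\{j_{(l,1)},\dots,j_{(l,r_l)}\}$ guarantees that the tuple $(j_{(1,1)},\dots,j_{(m-2,1)},2d,2)$ has pairwise distinct entries, so $q_{(1,\dots,1)}$ is a \emph{nonzero} multiple of $\Delta_m$ times a Schur polynomial; hence the sum above is not the zero polynomial. The assumption that $\varPsi$ is identically oriented ordered guarantees that for every surviving (i.e.\ nonzero) tuple $(l_1,\dots,l_{m-2})$, the sign $\sign(\sigma)$ pulled out during the column permutation is the same, so no cancellation occurs among the nonzero summands. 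Combining this with the fact that every Schur polynomial $S_\lambda$ is positive on $\R^m_{>0}$ (Kostka coefficients being nonnegative and not all zero, Proposition \ref{Prop:SchurPolynomial}), one concludes that $\Delta$ vanishes on $\R^m_{\geq 0}$ if and only if either some $x_\mu=0$ (killing $M_{2d}$ or producing a zero row) or $\Delta_m=0$, i.e.\ two of the $x_\mu$ coincide. Equivalently, $\Delta(x_1,\dots,x_m)=0$ on $\R^m_{\geq 0}$ iff $(x_1,\dots,x_m)$ is a $k$-point with $k\leq m-1$.

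Finally, I would transfer this statement about the leading principal minor to the statement about $\rank J(y)$ exactly as in the last paragraph of the proof of Lemma \ref{Lem:Jacobi}: if $y\in\R^n_{\geq 0}$ contains $m$ pairwise distinct positive values, choose the corresponding rows and apply the preceding computation (and the symmetry of $p$) to deduce $\rank J(y)\geq m$; conversely, if $y$ has at most $m-1$ distinct nonnegative components, then every choice of $m$ rows of $\widetilde J(y)$ produces an $(m\times m)$ submatrix whose determinant vanishes, so $\rank J(y)<m$. The main obstacle, and the genuinely new content relative to the $m=3$ case, is the bookkeeping in the multilinear expansion and the verification that the sign-cancellation issue is exactly what the ``identically oriented ordered'' hypothesis is designed to rule out; the rest of the argument is a direct generalization.
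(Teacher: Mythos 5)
Your argument is essentially identical to the paper's: restrict to the leading principal $(m\times m)$-minor using the linear dependence of the $M_{2d}M_2^d$-column on the $M_2^{2d}$- and $M_{2d}^2$-columns, expand multilinearly over the first $m-2$ columns, factor each surviving determinant as $\pm\Delta_m S_\lambda$, and invoke the condition $j_{(i,1)}\notin\{2,2d\}\cup\bigcup_{l<i}\{j_{(l,1)},\dots,j_{(l,r_l)}\}$ for non-vanishing together with the identically-oriented-ordered hypothesis to rule out sign cancellation, then transfer from the $m\times m$ minors to $\rank J(y)$ exactly as in Lemma~\ref{Lem:Jacobi}. (Your writing the scalar coefficient as a \emph{product} $\prod_i k_{(i,l_i)}j_{(i,l_i)}M_{j_{(i,l_i)}}^{k_{(i,l_i)}-1}\prod_{s\neq l_i}M_{j_{(i,s)}}^{k_{(i,s)}}$ is the correct form of the multilinear expansion; the paper's display \eqref{Equ:PrincipalMinor2} appears to have a typo with a sum $\sum_{i=1}^{m-2}$ in its place, though since both versions are positive this does not affect the conclusion.)
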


\begin{proof}
Basically, the proof works analogously to the one in Lemma \ref{Lem:Jacobi} up to the fact that we investigate $(m \times m)$-minors instead of $(3 \times 3)$-minors.

The last three columns of $J$ agree with those in the dimension four case (see proof of Lemma \ref{Lem:Jacobi}). For $1 \leq i \leq m-2$ the $i$-th column  of $J$ is given by
\begin{eqnarray*}
	\left(\begin{array}{c}
		\sum_{l = 1}^{r_i} k_{(i,l)} j_{(i,l)} x_1^{j_{(i,l)}-1} M_{j_{(i,l)}}^{k_{(i,l)}-1} \prod_{s \in \{1,\ldots,r_i\} \setminus \{l\}}M_{j_{(i,s)}}^{k_{(i,s)}} \\
		\vdots \\
		\sum_{l = 1}^{r_i} k_{(i,l)} j_{(i,l)} x_n^{j_{(i,l)}-1} M_{j_{(i,l)}}^{k_{(i,l)}-1} \prod_{s \in \{1,\ldots,r_i\} \setminus \{l\}}M_{j_{(i,s)}}^{k_{(i,s)}} \\
	\end{array}\right).
\end{eqnarray*}

Our goal is to find an $(m \times m)$-minor, which vanishes only on $k$-points with $k \leq m-1$. With the same arguments on the last column of $J$ and the symmetry of the variables, we can restrict to the leading principal $(m \times m)$-minor of $J$, as in the proof of Lemma \ref{Lem:Jacobi}. By calculation rules of the determinant this minor is given by $(4d)^2 M_{2d}M_{2}^{2d-1}$ times

\begin{eqnarray}
	\sum_{1 \leq l_1 \leq r_1, \cdots, 1 \leq l_m \leq r_m} q_{(l_1,\ldots,l_{m-2})}(x_1,\ldots,x_m) \cdot \sum_{i = 1}^{m-2} k_{(i,l_i)} j_{(i,l_i)}  M_{j_{(i,l_i)}}^{k_{(i,l_i)}-1} \prod_{s \in \{1,\ldots,r_i\} \setminus \{l_i\}}M_{j_{(i,s)}}^{k_{(i,s)}}, \label{Equ:PrincipalMinor2}
\end{eqnarray}
where
\begin{eqnarray}
	&   & q_{(l_1,\ldots,l_{m-2})}(x_1,\ldots,x_m) \nonumber \\
	& := & \det \left[ \begin {array}{ccccc}
		x_1^{j_{(1,l_1)}-1} & \cdots & x_1^{j_{(m-2,l_{m-2})}-1} & x_1 & x_1^{2d-1} \\
		\vdots & \ddots & \vdots & \vdots & \vdots \\
		x_m^{j_{(1,l_1)}-1} & \cdots & x_m^{j_{(m-2,l_{m-2})}-1} & x_m & x_m^{2d-1} \\
	\end{array} \right] \label{Equ:SchurFactorization2} \\
	& = & \Delta_m \cdot (\pm 1) \cdot S_{(d_1,\ldots,d_m)}, \nonumber
\end{eqnarray}
for appropriate choices of $d_i$, which we discuss in detail later. First, notice that all power sums involved in \eqref{Equ:PrincipalMinor2} are sums of squares since all $j_{(i,l_i)}$ are even by condition \eqref{Equ:ClassConditions2}. Hence the zero set of \eqref{Equ:PrincipalMinor2} only depends on the $q_{(l_1,\ldots,l_{m-2})}$ polynomials as in the dimension four case. Note that $q_{(l_1,\ldots,l_{m-2})}$ is the zero polynomial if and only if two columns of the matrix \eqref{Equ:SchurFactorization2} coincide, which is the case precisely if and only if $(j_{(1,l_1)},\ldots,j_{(m-2,l_{m-2})},2,2d)$ is an $(m-1)$-point. In particular, the minor \eqref{Equ:PrincipalMinor2} is not the zero polynomial, since the condition $j_{(i,1)} \notin \{2,2d\} \cup \bigcup_{l = 1}^{i-1} \{j_{(l,1)},\ldots,j_{(l,r_l)}\}$ guarantees that at least $q_{(1,\ldots,1)}$ is not the zero polynomial.

Notice that for all nonzero polynomials $q_{(l_1,\ldots,l_{m-2})}$ the factor $\pm 1$ is given by 
$$\sign(\sig_{(j_{(1,l_1)},\ldots,j_{(m-2,l_{m-2})},2,2d)}(j_{(1,l_1)},\ldots,j_{(m-2,l_{m-2})},2,2d)),$$
and each $d_i$ equals the $i$-the entry of the image vector of this permutation minus $(m-i)$ (see \eqref{Equ:Determinant}). Since we assumed in \eqref{Equ:ClassConditions2} that $\varPsi$ is identically oriented ordered, we know in particular that all signa of permutations corresponding to $(j_{(1,l_1)},\ldots,j_{(m-2,l_{m-2})},2,2d)$ coincide. Thus, we are done if we can show that every nonzero $q_{(l_1,\ldots,l_{m-2})}$ vanishes exactly at all $(m-1)$-points. But this is obviously the case since $\Delta_m$ vanishes if and only if two entries $x_i$ and $x_j$ coincide, and the whole matrix given in \eqref{Equ:SchurFactorization2} vanishes for $x_j = 0$ since it has a zero-column in this case. By Proposition \ref{Prop:SchurPolynomial} we can write $S_{(d_1,\ldots,d_m)}$ as a sum of monomial symmetric functions times a nonnegative Kostka-number, which guarantees that $q_{(l_1,\ldots,l_{m-2})}$ does not vanish on a non-$(m-1)$-point in the strict positive orthant. The rest of the argumentation is analogously to the proof of Lemma \ref{Lem:Jacobi}.
\end{proof}

With this lemma, we can prove an analogous version of Theorem \ref{Thm:Main}

\begin{thm}
Let $m \leq n$. The set of $(m-1)$-points is a test set for all even symmetric forms of the form $p := \sum_{i=1}^{m-2}\alpha_if_i(x) + \beta M_2^{2d} + \gamma M_{2d}^2 + \delta M_{2d}M_2^d$ as in \eqref{Equ:OurClass2} such that the conditions \eqref{Equ:ClassConditions2} are satisfied.
\label{Thm:Main2}
\end{thm}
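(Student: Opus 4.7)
The plan is to reproduce the contradiction argument from Theorem \ref{Thm:Main} essentially verbatim, with Lemma \ref{Lem:Jacobi2} playing the role of Lemma \ref{Lem:Jacobi}. Suppose for contradiction that $p$ is nonnegative at every $(m-1)$-point but $p \not\geq 0$ globally. By homogeneity, set $-\lambda := \min_{x \in \mathbb{S}^{n-1}} p(x) < 0$, attained at some $y \in \mathbb{S}^{n-1}$; since every monomial of $p$ has only even exponents, I may take $y \in \mathbb{S}^{n-1}_+$. Define $q(x) := p(x) + \lambda M_2^{2d}(x)$, which lives in the same $(m+1)$-dimensional subspace, is nonnegative on the sphere (and hence globally by homogeneity), and satisfies $q(y) = 0$.

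Since $y$ is a global minimizer of $q$, one has $\nabla q(y) = 0$, i.e., $J(y) \cdot v_q = 0$ for the coefficient vector $v_q = (\alpha_1, \ldots, \alpha_{m-2}, \beta + \lambda, \gamma, \delta)$ of $q$ in the basis of our subspace. The point $y$ is not an $(m-1)$-point (otherwise $p(y) \geq 0$), so Lemma \ref{Lem:Jacobi2} gives $\rank J(y) \geq m$. A direct computation (identical to the four-dimensional case) shows that the column of $J$ corresponding to $M_{2d}M_2^d$ is always a linear combination of the columns for $M_2^{2d}$ and $M_{2d}^2$, forcing $\rank J(y) \leq m$. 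Hence $\rank J(y) = m$ exactly, and $\ker J(y)$ is one-dimensional. Now the form
$$T_y(x) := \bigl(\overline{M_2}^d M_{2d}(x) - \overline{M_{2d}} M_2(x)^d\bigr)^2, \qquad \overline{M_r} := M_r(y),$$
from \eqref{equ:kern} lies in our subspace, since it expands as $\overline{M_2}^{2d} M_{2d}^2 - 2\overline{M_2}^d \overline{M_{2d}} M_{2d} M_2^d + \overline{M_{2d}}^2 M_2^{2d}$ (with zero coefficient on each $f_i$). Being a nonzero square vanishing at $y$, its coefficient vector is a nonzero element of $\ker J(y)$ and hence spans it. Consequently $v_q = k \cdot v_{T_y}$, i.e., $q = k T_y$ for some $k > 0$ (positivity of $k$ follows from $q, T_y \geq 0$ and $q \not\equiv 0$).

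Finally, Lemma \ref{lem:sphere} produces a $2$-point $z = (a, \ldots, a, b) \in \mathbb{S}^{n-1}_+$ with $M_{2d}(z) = \overline{M_{2d}}$; combined with $M_2(z) = 1 = \overline{M_2}$, this gives $T_y(z) = 0$, hence $q(z) = 0$ and so $p(z) = -\lambda < 0$. Since $m \geq 3$, any $2$-point is automatically an $(m-1)$-point, contradicting the standing assumption that $p \geq 0$ on all $(m-1)$-points. The only delicate issue is ensuring that $\ker J(y)$ is exactly one-dimensional (rather than trivial), which rests on the linear-dependence of the $\delta$-column on the $\beta$- and $\gamma$-columns observed above; the identically-oriented-ordered hypothesis in \eqref{Equ:ClassConditions2} is exactly what is needed for the $(m \times m)$-minor analysis in Lemma \ref{Lem:Jacobi2} to go through, after which the rest of the argument is a routine adaptation of the four-dimensional proof.
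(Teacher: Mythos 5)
Your proof is correct and follows the same contradiction argument as the paper's: extract a minimizer $y$, form $q = p + \lambda M_2^{2d}$, use Lemma \ref{Lem:Jacobi2} together with the linear dependence of the $M_{2d}M_2^d$-column to conclude $q$ is a positive multiple of $T_y$, and then contradict this with the $2$-point produced by Lemma \ref{lem:sphere}. The only difference is that you spell out several steps the paper leaves implicit (the first-order optimality at $y$, the upper bound $\rank J(y)\leq m$, the explicit expansion of $T_y$ into the subspace, and the positivity of the scalar $k$), which is sound elaboration rather than a different route.
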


\begin{proof}
 We need to prove that if $p = \sum_{i=1}^{m-2}\alpha_if_i(x) + \beta M_2^{2d} + \gamma M_{2d}^2 + \delta M_{2d}M_2^d$ is nonnegative at all $(m-1)$-points, it is also nonnegative globally. Suppose this is not the case. Let $-\lambda :=\min_{x\in\mathbb S^{n-1}} p < 0$ denote the minimum value of $p$ over the
unit sphere and let $y = (y_1,\dots,y_n) \in \mathbb S^{n-1}$ be the minimizer such that $p(y) = -\lambda$. Since the degree of every variable in every monomial of $p$
 is even, we can assume w.l.o.g. that $y \in \mathbb S^{n-1}_+$.  Then $q(x) := p(x) + \lambda M_2^{2d}(x) \geq 0$
and $q(y) = 0$. Since, by assumption, $y$ is not a $k$-point with $k\leq (m-1)$ (because $p$ is nonnegative at these points), $y$ must have at least $m$ distinct entries.
By Lemma \ref{Lem:Jacobi2} we have $\rank J(y) = m$ and hence $q = k\cdot T_{y}(x)$ with $T_y$ as in \eqref{equ:kern} and $k > 0$. Thus $q(x) = 0$ whenever 
$x_1^{2d}+\dots + x_n^{2d} = y_1^{2d}+\dots + y_n^{2d}$.
By Lemma  \ref{lem:sphere} there exists a $2$-point $z = (a,\ldots,a,b)$ such that $(n-1)a^2 + b^2 = 1$ and $(n-1)a^{2d} + b^{2d} = \overline{M_{2d}}$. But this implies
$p(z) = -\lambda$ which is a contradiction since $p$ is nonnegative at all $2$-points.
\end{proof}

\textbf{Example}: Consider even symmetric forms in $n = 6$ variables of degree $4d = 32$. By Timofte's theorem these forms are nonnegative if and only they are nonnegative at all $8$-points, which obviously is a useless information in this case. However, considering appropriate subspaces of dimension $m + 1 \leq 7$, Theorem \ref{Thm:Main2} states that nonnegativity at these subspaces can be checked at $(m - 1)$-points. 

\section{$k$-point Certificates at Maximal Subspaces}
\label{Sec:konstante}
We have seen that the number of components to check for nonnegativity of even symmetric forms can be reduced by considering appropriate subspaces containing
the three power sums $$M_2^{2d}, M_{2d}^2, M_{2d}M_2^d.$$
Recall that $\mathbb R[x_1,\dots,x_n]_{4d}^{S,e}$ is the vector space of \emph{even} symmetric forms in $n$ variables of degree $4d$ and $B$ be the basis given by the power sum polynomials. In this section we analyze the problem to determine for fixed $k\in \mathbb N$ the maximum dimension of all subspaces of forms given as
\begin{eqnarray*}
 p &:=& \sum_{i=1}^{m}\alpha_if_i(x) + \beta M_2^{2d} + \gamma M_{2d}^2 + \delta M_{2d}M_2^d
\end{eqnarray*}
where $f_i(x) \in B$ for $1\leq i\leq m$ and where nonnegativity can be checked at all $k$-points. Recall that
\begin{eqnarray*}
 M_{n,4d}^{(k)} \ = \ \max\{m & : & p\geq 0\Leftrightarrow p\geq 0\,\, \textrm{at all}\,\, k\textrm{-points}, \\ 
  & & \textrm{for all}\,\,p\,\,\textrm{with}\,\,\{f_{1},\dots,f_{m}\}\subseteq B\,\,\textrm{and}\,\,\alpha_i, \beta,\gamma,\delta\in\R^*\}.
\end{eqnarray*}
Note that $M_{s,4d}^{(k)} = M_{L(n,4d),4d}^{(k)}$ for $s > L(n,4d)$ where $L(n,4d) := \dim \R[x_1,\dots,x_n]_{4d}^{S,e}$.
As an illustrative example consider the quantity $M_{3,12}^{(2)}$. We have  $\dim \mathbb R[x_1,x_2,x_3]_{12}^{S,e} = 7$. An element $p\in \mathbb R[x_1,x_2,x_3]_{12}^{S,e}$ can be represented as
\begin{eqnarray*}
 p &=& \alpha_1M_6M_4M_2 + \alpha_2M_4^3 + \alpha_3M_4^2M_2^2 + \alpha_4M_4M_2^4 + \beta M_2^{6} + \gamma M_{6}^2 + \delta M_{6}M_2^3 .
\end{eqnarray*}
Fixing the last three terms, the question is, how many of the first four terms can be used in the representation of $p$ to decide nonnegativity of $p$ via nonnegativity at all $2$-points for any choice of the remaining four power sums. For example, if $M_{3,12}^{(2)} = 1$, then the forms 
\begin{eqnarray*}
 p &=& \beta M_2^{6} + \gamma M_{6}^2 + \delta M_{6}M_2^3 + \alpha_1q \text{ with } q\in \{M_6M_4M_2, M_4^3, M_4^2M_2^2, M_4M_2^4\}
\end{eqnarray*}
 would be nonnegative if and only if they are nonnegative at all $2$-points, and there exist $f_1, f_2 \in \{M_6M_4M_2, M_4^3, M_4^2M_2^2, M_4M_2^4\}$ such that
\begin{eqnarray*}
 p &=& \beta M_2^{6} + \gamma M_{6}^2 + \delta M_{6}M_2^3 + \alpha_1f_1 + \alpha_2f_2
\end{eqnarray*}
 is nonnegative at all $2$-points but not globally nonnegative.
 In fact, we prove that $M_{3,12}^{(2)} = 1$ by a much stronger result, which partially follows from Theorem \ref{Thm:Main}. The next Lemma is a generalization of Lemma 3.3 in \cite{Harris}.

\begin{lemma}
\label{Lem:Jacobi3}
 Let $d \geq 3$, $y \in \mathbb R^n_{\geq 0}$ and $\varphi : \mathbb R^n \to \mathbb R^3$ defined by
$$\varphi : (x_1,\dots,x_n) \mapsto (M_2, M_{2d - 2}, M_{2d}).$$
Then $y \in \partial \varphi(\mathbb R^3)$ if and only if $y$ is a $2$-point.
\end{lemma}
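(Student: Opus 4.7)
The plan is to adapt the Jacobian technique of Lemma~\ref{Lem:Jacobi} to the three-coordinate map $\varphi$, show that $\rank J_\varphi(y)<3$ if and only if $y$ is a $2$-point, and then translate this rank statement into a statement about the topological boundary of $\varphi(\R^n_{\geq 0})$ via the inverse function theorem in one direction and a dimension-counting argument in the other.

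First I would write down $J_\varphi(y)$, whose $i$-th row is $\bigl(2y_i,\; (2d-2)y_i^{2d-3},\; 2d\,y_i^{2d-1}\bigr)$, and examine the $(3\times 3)$-minor indexed by rows $i,j,k$. After factoring out $y_iy_jy_k$ and substituting $u_s=y_s^2$, this minor equals, up to a nonzero numerical constant,
\[
y_iy_jy_k\cdot\det\!\begin{pmatrix}1 & u_i^{d-2} & u_i^{d-1}\\ 1 & u_j^{d-2} & u_j^{d-1}\\ 1 & u_k^{d-2} & u_k^{d-1}\end{pmatrix}\;=\;\pm\, y_iy_jy_k\cdot\Delta_3(u_i,u_j,u_k)\cdot S_{(d-3,d-3,0)}(u_i,u_j,u_k),
\]
exactly as in the proof of Lemma~\ref{Lem:Jacobi}. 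For $d\ge 3$ the partition $(d-3,d-3,0)$ is admissible and, by Proposition~\ref{Prop:SchurPolynomial}, $S_{(d-3,d-3,0)}>0$ on $\R^3_{>0}$. Hence this minor vanishes precisely when one of $y_i,y_j,y_k$ is zero or two of them coincide. Running this over all row triples shows $\rank J_\varphi(y)<3$ if and only if $y$ has at most two distinct positive entries, i.e.\ if and only if $y$ is a $2$-point.

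For the topological conclusion, if $y$ is not a $2$-point I pick indices $i_1,i_2,i_3$ with $y_{i_1}>y_{i_2}>y_{i_3}>0$; the corresponding $(3\times 3)$-submatrix of $J_\varphi(y)$ is invertible, so the inverse function theorem applied to $\varphi$ in the variables $(x_{i_1},x_{i_2},x_{i_3})$ (with the remaining coordinates frozen at $y$) produces an open neighborhood of $\varphi(y)$ inside $\varphi(\R^n_{\geq 0})$, whence $\varphi(y)\in\operatorname{int}\varphi(\R^n_{\geq 0})$. Conversely, if $y$ is a $2$-point then $\varphi(y)$ lies in the image of the $2$-point stratum, which decomposes into finitely many pieces parameterized by the two positive values $a,b$ together with the discrete multiplicity data, each mapping to an at most two-dimensional semialgebraic subset of $\R^3$; combined with the previous step, a semialgebraic dimension count then forces $\varphi(y)$ onto the boundary.

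The step I expect to be the main obstacle is this final implication, because a priori $\varphi(y)$ at a $2$-point $y$ could coincide with $\varphi(z)$ for some non-$2$-point $z$ whose image is interior, which would spoil the dimension argument. The cleanest way to exclude this that I foresee is either to invoke classical truncated moment-cone theory (after the substitution $u_i=y_i^2$ the image of $\varphi$ is the moment cone of the Tchebycheff system $\{1,t^{d-1},t^d\}$ on $[0,\infty)$, whose boundary is attained exactly by measures of minimal support, in agreement with our $2$-point characterization), or to work directly with the one-dimensional cokernel of $J_\varphi(y)$: it is spanned by some $\ell\in(\R^3)^*$, and a Hessian computation showing that $\ell\circ\varphi$ attains a local extremum at $y$ implies $\varphi(y)\notin\operatorname{int}\varphi(\R^n_{\geq 0})$.
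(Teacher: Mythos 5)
Your computational core coincides with the paper's: both reduce to showing the leading $3\times3$ minor of the Jacobian of $\varphi$ factors as a Vandermonde determinant times a Schur polynomial, then invoke Proposition~\ref{Prop:SchurPolynomial} to conclude that it vanishes on $\R^3_{\geq 0}$ precisely when some coordinate is zero or two coincide. Your substitution $u_s=y_s^2$ is cleaner than the paper's direct factorization (and, incidentally, your index $(d-3,d-3,0)$ looks right where the paper's stated index $S_{(2d-2,2d-3,2)}$ appears to contain a slip; the correct Schur index in the $x$-variables is $(2d-3,2d-4,1)$, which your $u$-substitution recasts as $(d-3,d-3,0)$). Either way, for $d\geq 3$ the partition is legitimate and the conclusion $\rank J_\varphi(y)<3 \Leftrightarrow y$ is a $2$-point goes through as in Lemma~\ref{Lem:Jacobi}.

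Where you add genuine value is in flagging the topological step. The paper's proof simply asserts the equivalence $\varphi(y)\in\partial\varphi(\R^n_{\geq 0})\Leftrightarrow\rank J_\varphi(y)<3$ without justification, whereas you correctly observe that only the implication ``full rank $\Rightarrow$ interior'' is automatic (inverse function theorem), and that the converse requires ruling out a critical value also being a regular value attained elsewhere. That is exactly the right worry. Of your two sketched repairs, be aware that both need more care than the sketch lets on: the moment-cone route is not immediate because $\varphi(\R^n_{\geq 0})$ (after $u=y^2$) is the image of sums of exactly $n$ \emph{unit-mass} Dirac measures, which is a proper subset of the full $\{1,t^{d-1},t^d\}$ moment cone, so its boundary need not a~priori coincide with the Karlin--Studden boundary; and the single-functional Hessian route hits an obstruction when $y$ has two distinct positive levels $a<b$, since $h(u)=\lambda_0 u+\lambda_1 u^{d-1}+\lambda_2 u^d$ has $h'$ with at most two positive roots and hence cannot have local maxima at both $a^2$ and $b^2$, and moreover a local extremum only excludes a local interior, leaving open the possibility of $\varphi(y)$ being hit by a regular point elsewhere. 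So your diagnosis of the gap is correct and more honest than the paper's one-line assertion, but the closing argument still needs to be pinned down, for instance by analyzing the one-dimensional fiber $\{M_2=c_0,\ M_{2d-2}=c_1\}\cap\R^n_{\geq 0}$ directly and showing that $M_{2d}$ attains its extremal values over that fiber exactly at the $2$-points.
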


\begin{proof}
 The Jacobian $\Jac(y)$ of $\varphi$ at a point $y$ is a ($3\times n$)-matrix. Then $y \in \partial \varphi(\mathbb R^3)$ if and only if $\rank\Jac(y) < 3$. By symmetry, it suffices to investigate the leading principal $(3\times 3)$-minor corresponding to the first three rows and columns. By \eqref{Equ:Determinant}, \eqref{Equ:Vandermonde} and \eqref{Equ:Schur} this minor is given by $2\cdot (2d - 2)\cdot 2d\cdot \Delta_3\cdot S_{2d - 2, 2d - 3, 2}$.  As in the proof of Lemma \ref{Lem:Jacobi} this minor vanishes if and only if $y$ is a $2$-point.
\end{proof}

Note that we have $M_{n,4}^{(2)} = 0$ and $M_{n,8}^{(2)} = 2$ by Timofte's theorem.

\begin{thm}
\label{thm:maximum}
  Let $p := \sum_{i=1}^{m}\alpha_if_i(x) + \beta M_2^{2d} + \gamma M_{2d}^2 + \delta M_{2d}M_2^d \in \mathbb R[x_1,\dots,x_n]_{4d}^{S,e}$ with $\alpha_1, \dots, \alpha_m, \beta, \gamma, \delta \in \R^*$. Then for $4d \geq 12$ we have
$$M_{n,4d}^{(2)} \ = \ 1\quad \textrm{for}\quad n \in \{d - 1, d\}.$$
\end{thm}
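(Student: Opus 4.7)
The plan is to prove the two inequalities $M_{n,4d}^{(2)}\ge 1$ and $M_{n,4d}^{(2)}\le 1$ separately, with Theorem~\ref{Thm:Main} handling the former and a geometric construction via Lemma~\ref{Lem:Jacobi3} handling the latter. For the lower bound, fix any $f_1 \in B$ and write $f_1 = M_{j_1}^{k_1}\cdots M_{j_r}^{k_r}$. The only degree-$4d$ even power sum products with all $j_i \in \{2,2d\}$ are precisely $M_2^{2d}, M_{2d}^2$ and $M_{2d}M_2^d$; in the degenerate case $f_1$ equals one of these, so the enlarged space does not really grow beyond three dimensions and $p$ reduces to a quadratic in $(M_2^d, M_{2d})$ whose nonnegativity, via the substitution $t = M_{2d}/M_2^d$, is equivalent to a univariate condition already exhausted by $2$-points. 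Otherwise some $j_i \notin \{2,2d\}$ and we may reorder to place it first, yielding $j_1 \notin \{2,2d\}$. Because $B$ corresponds to partitions of $2d$ with parts at most $n \leq d$, every $j_i \leq 2n \leq 2d$, so the first alternative of~\eqref{Equ:ClassConditions} is automatic and Theorem~\ref{Thm:Main} applies.

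For the upper bound $M_{n,4d}^{(2)}\le 1$, I would produce a two-term counterexample that factors through the three-variable map $\varphi = (M_2, M_{2d-2}, M_{2d})$. Take
\begin{equation*}
 f_1 := M_{2d-2}\,M_2^{d+1}, \qquad f_2 := M_{2d-2}^2\,M_2^2,
\end{equation*}
which lie in $B$ for $n \in \{d-1,d\}$ since the underlying partitions $(d-1, 1^{d+1})$ and $(d-1, d-1, 1, 1)$ of $2d$ have parts at most $d-1 \leq n$. For parameters $u_0, v_0 > 0$ and $c > 0$, set the coefficients $\alpha_2 = \gamma = 1$, $\alpha_1 = -2u_0$, $\delta = -2v_0$, and $\beta = u_0^2+v_0^2-c$, so that
\begin{equation*}
 p \;=\; (M_{2d-2} - u_0 M_2^{d-1})^2 M_2^2 \;+\; (M_{2d} - v_0 M_2^d)^2 \;-\; c\,M_2^{2d}.
\end{equation*}
On the affine slice $\{M_2(x) = 1\}$ this becomes $p(x) = (M_{2d-2}(x) - u_0)^2 + (M_{2d}(x) - v_0)^2 - c$, which is nonnegative precisely when $(M_{2d-2}(x), M_{2d}(x))$ lies outside the open disk of radius $\sqrt{c}$ around $(u_0, v_0)$.

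Lemma~\ref{Lem:Jacobi3} now supplies the decisive geometry: since $n \geq 3$ there exist non-$2$-points in $\R^n_{\geq 0}$, so the compact planar region $K := \varphi(\R^n_{\geq 0}) \cap \{M_2 = 1\}$ has nonempty interior, and $\partial K$ is precisely the image of the $2$-points on the unit $M_2$-sphere. Pick $(u_0, v_0)$ strictly inside $K$ at distance $r > 0$ from $\partial K$, and then any $c \in (0, r^2)$ avoiding the single value $u_0^2+v_0^2$ so that all five coefficients remain nonzero. The closed disk of radius $\sqrt{c}$ around $(u_0, v_0)$ then lies entirely in the interior of $K$ and misses $\partial K$, so $p \geq 0$ at every $2$-point while $p(x_0) = -c < 0$ for any $x_0$ with $\varphi(x_0) = (1, u_0, v_0)$; this gives $M_{n,4d}^{(2)}\le 1$. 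The main obstacle is finding the right ansatz: recognizing that the counterexample should live in the subring generated by $M_2, M_{2d-2}, M_{2d}$, so that Lemma~\ref{Lem:Jacobi3} offers a non-trivial interior-versus-boundary dichotomy in which a \emph{negativity disk} can be hidden inside $K$ while all $2$-point values remain nonnegative.
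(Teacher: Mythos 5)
Your proposal follows essentially the same approach as the paper: the lower bound uses that $n \in \{d-1,d\}$ forces all $j_i \leq 2d$ so Theorem~\ref{Thm:Main} applies, and the upper bound constructs the same two-term counterexample with $f_1 = M_{2d-2}M_2^{d+1}$, $f_2 = M_{2d-2}^2 M_2^2$ and invokes Lemma~\ref{Lem:Jacobi3} to separate interior points from images of $2$-points. Your parametrization by an arbitrary interior point $(u_0,v_0)$ and disk radius $\sqrt{c}$ is a slightly cleaner rephrasing of the paper's choice of a specific non-$2$-point $v$ on the sphere and a positive margin $\kappa$, but the substance of the argument is identical.
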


\begin{proof}
  Let $4d \geq 12$. Furthermore, since $n \in \{d - 1, d\}$ the additional power sums $f_j(x) = M_{j_1}^{k_1} \cdots M_{j_r}^{k_r} \in B$ have the property that $j_k \leq 2d$ for $1\leq k\leq r$. This is because every even symmetric form in $n$ variables can uniquely be represented in the first $n$ power sums of even power (see Section \ref{Sec:Preliminaries}). Hence, by Theorem \ref{Thm:Main} we have $M_{n,4d}^{(2)}\geq 1$ and it remains to show that there is a choice of two power sums  $f_1, f_2 \in B$ such that
\begin{eqnarray*}
 p &=& \beta M_2^{6} + \gamma M_{6}^2 + \delta M_{6}M_2^3 + \alpha_1f_1 + \alpha_2f_2
\end{eqnarray*}
is nonnegative at all $2$-points but not nonnegative globally. For this we generalize the construction in \cite{Harris} where the author proves this for even symmetric ternary forms of degree $12$. For $y \in \mathbb R^n$ define
\begin{eqnarray*}
 p_y(x_1,\dots,x_n) &:=& (\overline{M_2}^dM_{2d} - \overline{M_{2d}}M_2^d)^2 +  (\overline{M_2}^dM_{2d-2}M_2 - \overline{M_{2d-2}}\,\overline{M_2}M_2^d)^2.
\end{eqnarray*}
The form $p$ is precisely of our desired form $p = \beta M_2^{2d} + \gamma M_{2d}^2 + \delta M_{2d}M_2^d + \alpha_1f_1 +\alpha_2f_2$ with $f_1 = M_{2d-2}^2M_2^2$ and 
$f_2 = M_{2d-2}M_2M_2^d$. Note that $f_1, f_2 \in B$ if $n\in \{d - 1, d\}.$
By construction, for $x\in \mathbb S^{n-1}$ we have $M_2(x) = 1$ and hence $p_y(x) = 0$ if and only if $M_{2d} = \overline{M_{2d}}$ and $M_{2d - 2} = \overline{M_{2d - 2}}$.
Note that $\frac{1}{n^{d-2}}\leq M_{2d - 2}\leq 1$ and $\frac{1}{n^{d-1}}\leq M_{2d}\leq 1$ (see proof of Lemma \ref{lem:sphere}). Fix $\Theta \in (\frac{1}{n^{d-2}},1)$ and define $Y_{\Theta} := \{x\in \mathbb S^{n-1} : M_{2d - 2}(x) = \Theta\}$. We then have 
$\varepsilon_2(\Theta)\leq M_{2d}(t) \leq \varepsilon_1(\Theta)$ for some $\varepsilon_1, \varepsilon_2$ as $t$ ranges over $Y_{\Theta}$.
Note that $\frac{1}{n^{d-1}} < \varepsilon_2 < \varepsilon_1 < 1$ since for $x\in \mathbb S^{n-1}$ $\frac{1}{n^{d-2}} < M_{2d - 2}(x) < 1$ implies $\frac{1}{n^{d-1}} < M_{2d}(x) < 1$. Now, choose some $v \in Y_{\Theta}$ such that $\varepsilon_2(\Theta) < M_{2d}(v) < \varepsilon_1(\Theta)$. Since we are dealing with even symmetric forms we can additionally assume w.l.o.g. that $v \in \mathbb S^{n-1}_+$. By Lemma \ref{Lem:Jacobi3} $v$ is not a $k$-point for $k\leq 2$. Hence, if $z \in \mathbb S^{n-1}_+$ is a $2$-point, then we claim (due to $v \in Y_{\Theta} \subset \mathbb S^{n-1}$) that
\begin{eqnarray*}
 p_v(z) &=&  (M_{2d}(z) - M_{2d}(v))^2 + (M_{2d-2}(z) - \Theta)^2 \ \geq \ \kappa\ >\ 0.
\end{eqnarray*}
For $M_{2d-2}(z) \neq \Theta$ this is obvious. If $M_{2d-2}(z) = \Theta$, then we use Lemma \ref{Lem:Jacobi3}. On the one hand, $\varphi(z)$ and $\varphi(v)$ can only differ in the last component. On the other hand, $z \in \partial \varphi(\mathbb R^3)$ and $v \notin \partial \varphi(\mathbb R^3)$. Thus, $M_{2d}(z) \neq M_{2d}(v)$.
Note that also $p_v(z) \geq\kappa > 0$ at all $2$-points $z \in \mathbb S^{n-1}\setminus \mathbb S^{n-1}_+$, since $p$ is even symmetric. 

Choosing $0 < \lambda < \kappa$ we conclude that
$p_{v,\lambda} := p_v - \lambda M_2^{2d}$ is nonnegative at all $2$-points but not nonnegative globally since $p_{v,\lambda}(v) = -\lambda < 0$. So, we have constructed a form  $p = \beta M_2^{6} + \gamma M_{6}^2 + \delta M_{6}M_2^3 + \alpha_1f_1 + \alpha_2f_2$ that is nonnegative at all $2$-points but not nonnegative globally and hence 
$M_{n,4d}^{(2)} = 1$, for $n \in \{d - 1, d\}.$
\end{proof}

We conclude the following corollary generalizing \cite[Theorem 3.3]{Harris}, which covers $n = 3$.
\begin{cor}
Let $4d \geq 12$. The set of $2$-points is not a test set for $\mathbb R[x_1,\dots,x_n]_{4d}^{S,e}$ for $n \leq d$.
\end{cor}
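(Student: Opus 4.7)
The plan is to produce, for every $3 \le n \le d$, an element of $\mathbb R[x_1,\dots,x_n]_{4d}^{S,e}$ which is nonnegative at all $2$-points yet not nonnegative globally. For $n \in \{d-1, d\}$ this is immediate from Theorem~\ref{thm:maximum}: the form $p_{v,\lambda} = p_v - \lambda M_2^{2d}$ exhibited in its proof lies in $\mathbb R[x_1,\dots,x_n]_{4d}^{S,e}$ and has exactly these two properties, so the set of $2$-points cannot be a test set.

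For the remaining range $3 \le n < d-1$, I would re-run that very construction verbatim. The expression
\begin{equation*}
p_v(x) \ := \ \bigl(\overline{M_2}^d M_{2d}(x) - \overline{M_{2d}}\, M_2(x)^d\bigr)^2 + \bigl(\overline{M_2}^d M_{2d-2}(x) M_2(x) - \overline{M_{2d-2}}\,\overline{M_2}\, M_2(x)^d\bigr)^2
\end{equation*}
defines a well-formed even symmetric form of degree $4d$ in $n$ variables, independently of whether the constituent power-sum monomials $M_{2d-2}^2 M_2^2$ and $M_{2d-2} M_2^{d+1}$ remain linearly independent for small $n$ (they need not—this is irrelevant to the corollary, which only asks for \emph{some} counterexample form). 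All the ingredients used in the proof of Theorem~\ref{thm:maximum} carry over unchanged: Lemma~\ref{Lem:Jacobi3} requires only $d \ge 3$, Lemma~\ref{lem:sphere} continues to supply $2$-points on any prescribed level set of $M_{2d}$ inside $\mathbb S^{n-1}$, and the quantitative bound $p_v(z) \ge \kappa > 0$ at every $2$-point $z \in \mathbb S^{n-1}_+$ is established by the identical argument. Taking $0 < \lambda < \kappa$ then produces $p_{v,\lambda} := p_v - \lambda M_2^{2d}$ nonnegative at every $2$-point while $p_{v,\lambda}(v) = -\lambda < 0$.

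The one point that warrants a sentence of justification, and which I regard as the only mild obstacle, is the existence of an appropriate non-$2$-point $v \in \mathbb S^{n-1}_+$ whose image $\varphi(v) = (1, M_{2d-2}(v), M_{2d}(v))$ lies in the interior of $\varphi(\mathbb S^{n-1}_+)$, in the smallest dimension $n = 3$. Here $\mathbb S^{n-1}_+$ is smooth of dimension $n-1 \ge 2$ while its $2$-point locus is a finite union of semialgebraic sets of codimension at least one, so non-$2$-points form a dense open subset; by Lemma~\ref{Lem:Jacobi3}, any such $v$ automatically maps into the interior of $\varphi(\mathbb S^{n-1}_+)$, which is precisely what the construction requires. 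With this dimension check in hand, the rest is a direct transcription of the proof of Theorem~\ref{thm:maximum}, yielding the counterexample in $\mathbb R[x_1,\dots,x_n]_{4d}^{S,e}$ for all $3 \le n \le d$.
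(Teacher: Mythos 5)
Your proof is correct but is not the one the paper gives. The paper handles $3 \le n < d-1$ differently: it invokes Theorem~\ref{thm:maximum} with $d' := n$ (so $n \in \{d'-1,d'\}$ and $4d' = 4n \ge 12$), obtaining a counterexample of degree $4n$, and then multiplies it by $M_2^{2(d-n)}$; since $M_2$ is strictly positive away from the origin, the product is still negative at the original witness while remaining nonnegative at every $2$-point, and it now has degree $4d$. Your route instead re-runs the entire construction of $p_{v,\lambda}$ directly in degree $4d$ for every $3 \le n \le d$, correctly observing that the hypothesis $n \in \{d-1,d\}$ in Theorem~\ref{thm:maximum} is used only to place $M_{2d-2}^2M_2^2$ and $M_{2d-2}M_2^{d+1}$ inside the power-sum basis $B$ --- a requirement for computing $M_{n,4d}^{(2)}$, but irrelevant for producing a single form with the desired behaviour. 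Both arguments work: the paper's degree-shift is a one-line reduction that needs no re-verification, whereas yours is more self-contained and makes explicit which dependencies (only $d\ge 3$, $n\ge 3$, and the density of non-$2$-points on $\mathbb S^{n-1}_+$) the construction actually uses. One small inaccuracy in your write-up: Lemma~\ref{lem:sphere} is cited in the proof of Theorem~\ref{thm:maximum} only for the bounds $\frac{1}{n^{d-2}}\le M_{2d-2}\le 1$ etc.; it is not used to produce $2$-points on level sets there, so that ingredient is not really part of what carries over, though this does not affect your argument.
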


\begin{proof}
 Theorem \ref{thm:maximum} proves the corollary for $n \in \{d - 1, d\}$. For the general case we can multiply the form $p$ in Theorem \ref{thm:maximum} by an appropriate power sum in order to increase the degree.
\end{proof}

Another consequence of Theorem \ref{thm:maximum} is that for $n \in \{d - 1, d\}$ the set of all $n$-forms of degree $4d$ for which the set of $2$-points is a test set is not convex. For this, we recall that
\begin{eqnarray*}
A_{n,4d}^{(k)} &=& \{p \in \mathbb R[x_1,\dots,x_n]_{4d}^{S,e}\ : \ p\geq 0\Leftrightarrow p\geq 0\,\, \textrm{at all}\,\, k\textrm{-points}\}.
\end{eqnarray*}
Note that $A_{n,4d}^{(k)}\subseteq A_{n,4d}^{(k+1)}$ and $A_{n+1,4d}^{(k)}\subseteq A_{n,4d}^{(k)}$ for all $n, d, k\in \N$. Furthermore, by Timofte's theorem, we always have $A_{n,4d}^{(d)} = \mathbb R[x_1,\dots,x_n]_{4d}^{S,e}$ for $n \geq d$.

\begin{cor}
\label{cor:convex}
Let $4d \geq 12$ and $n\in\{d-1, d\}$. The set $A_{n,4d}^{(2)}$ is not convex.
\end{cor}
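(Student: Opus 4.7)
The plan is to extract, from the proof of Theorem \ref{thm:maximum}, a form $p_{v,\lambda}\notin A_{n,4d}^{(2)}$ and exhibit it explicitly as a midpoint of two forms that each lie in $A_{n,4d}^{(2)}$ by Theorem \ref{Thm:Main}. The construction of Theorem \ref{thm:maximum} produced
\[
p_{v,\lambda} \;=\; p_v - \lambda M_2^{2d} \;=\; \bigl(\overline{M_2}^{\,d}M_{2d} - \overline{M_{2d}}\,M_2^{d}\bigr)^{2} + \bigl(\overline{M_2}^{\,d}M_{2d-2}M_2 - \overline{M_{2d-2}}\,\overline{M_2}\,M_2^{d}\bigr)^{2} - \lambda M_2^{2d},
\]
which, after expanding the squares and using $\overline{M_2}=M_2(v)=1$, takes the form
\[
p_{v,\lambda} \;=\; C\,M_2^{2d} + A\,M_{2d}^{2} + B\,M_{2d}M_2^{d} + D\,M_{2d-2}^{2}M_2^{2} + E\,M_{2d-2}M_2^{\,d+1},
\]
with $A=D=1$, $B=-2\overline{M_{2d}}$, $E=-2\overline{M_{2d-2}}$, and $C=\overline{M_{2d}}^{\,2}+\overline{M_{2d-2}}^{\,2}-\lambda$. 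Since $v\in\mathbb{S}^{n-1}_{+}\setminus\{0\}$, the values $\overline{M_{2d}}$ and $\overline{M_{2d-2}}$ are strictly positive, so $A,B,D,E\in\mathbb{R}^{*}$; and by taking $\lambda<\min(\kappa,\overline{M_{2d}}^{\,2}+\overline{M_{2d-2}}^{\,2})$ in the construction of Theorem \ref{thm:maximum}, we also have $C\in\mathbb{R}^{*}$.

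Next I would split $p_{v,\lambda}=\tfrac{1}{2}(p_{1}+p_{2})$ where
\[
p_{1} \;:=\; C\,M_2^{2d} + A\,M_{2d}^{2} + B\,M_{2d}M_2^{d} + 2D\,M_{2d-2}^{2}M_2^{2},
\]
\[
p_{2} \;:=\; C\,M_2^{2d} + A\,M_{2d}^{2} + B\,M_{2d}M_2^{d} + 2E\,M_{2d-2}M_2^{\,d+1}.
\]
Each $p_{i}$ has exactly the shape \eqref{Equ:OurClass} with a single added monomial in power sums. For $p_{1}$ the exponent data is $(j_{1},k_{1},j_{2},k_{2})=(2d-2,2,2,2)$, and for $p_{2}$ it is $(2d-2,1,2,d+1)$; in both cases $j_{1}=2d-2\notin\{2,2d\}$ (which uses $4d\geq 12$, so $d\geq 3$), $\sum j_{i}k_{i}=4d$, all $j_{i}$ are even and $\leq 2d$. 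Thus conditions \eqref{Equ:ClassConditions} are satisfied, and Theorem \ref{Thm:Main} yields $p_{1},p_{2}\in A_{n,4d}^{(2)}$. Note that $p_{1}$ and $p_{2}$ involve power sums with subscripts in $\{2,2d-2,2d\}$, which lie in $B$ whenever $n\geq d-1$, so the splitting is admissible for $n\in\{d-1,d\}$.

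Since $p_{v,\lambda}\notin A_{n,4d}^{(2)}$ by the conclusion of Theorem \ref{thm:maximum} while $p_{1},p_{2}\in A_{n,4d}^{(2)}$, the midpoint of two elements of $A_{n,4d}^{(2)}$ fails to lie in $A_{n,4d}^{(2)}$, proving non-convexity. The main (minor) technical point is verifying that the coefficients $A,B,C,D,E$ are all nonzero so that Theorem \ref{Thm:Main} genuinely applies to $p_{1}$ and $p_{2}$; this is handled by the explicit formulas together with the choice of $\lambda$ in Theorem \ref{thm:maximum}.
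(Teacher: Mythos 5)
Your proof is correct and follows the same route as the paper: take the form $p_{v,\lambda}\notin A_{n,4d}^{(2)}$ constructed in the proof of Theorem \ref{thm:maximum}, write it as the midpoint of $p_1$ and $p_2$, each of which drops one of the two extra power-sum products, and invoke Theorem \ref{Thm:Main} on $p_1$ and $p_2$. The paper's argument is terser and leaves implicit that (i) $f_1=M_{2d-2}^2M_2^2$ and $f_2=M_{2d-2}M_2^{d+1}$ indeed satisfy conditions \eqref{Equ:ClassConditions}, and (ii) all coefficients, in particular the one in front of $M_2^{2d}$, are nonzero so that Theorem \ref{Thm:Main} genuinely applies; you spell these out, including the (harmless) additional restriction on $\lambda$ to guarantee $C\neq 0$. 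That is a useful tightening but not a different idea.
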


\begin{proof}
 By Theorem \ref{thm:maximum} there exists a form 
$$p \ = \  \beta M_2^{2d} + \gamma M_{2d}^2 + \delta M_{2d}M_2^d + \alpha_1f_1 +\alpha_2f_2 \,\notin\, A_{n, 4d}^{(2)}$$
for some $f_1, f_2 \in B\setminus\{M_2^{2d}, M_{2d}^2, M_{2d}M_2^d\}$. Obviously, $p = \frac{1}{2}p_1 + \frac{1}{2}p_2$ with $p_1 :=  \beta M_2^{2d} + \gamma M_{2d}^2 + \delta M_{2d}M_2^d + 2\alpha_1f_1$ and
$p_2 :=  \beta M_2^{2d} + \gamma M_{2d}^2 + \delta M_{2d}M_2^d + 2\alpha_2f_2$. By Theorem \ref{Thm:Main} we have $p_1, p_2 \in A_{n,4d}^{(2)}$.
\end{proof}

Notice that due to the inclusion $A_{n+1,4d}^{(k)}\subseteq A_{n,4d}^{(k)}$ it is not obvious that for $n < d - 1$ the upper corollary still holds.
We note furthermore, that the number $M_{n,4d}^{(2)}$ for $n > d$ seems to be more challenging to determine than for $n \leq d$. For example, for $n = 4$ and $4d = 12$ we have $\dim \mathbb R[x_1,x_2,x_3,x_4]_{12}^{S,e} = 9$. The Jacobian of the form $p  := \beta M_2^{6} + \gamma M_{6}^2 + \delta M_{6}M_2^3 + \alpha_1 M_8M_4$ does not satisfy Lemma \ref{Lem:Jacobi} and therefore it seems unclear whether $p$ is nonnegative if and only if it is nonnegative at all $2$-points. However, we conjecture that this is true as well as $M_{n,4d}^{(2)} = 1$ for $n > d$.

\section{Outlook}
Since  it seems a very difficult problem to determine conditions for forms in our investigated subspaces to be sums of squares, it would be an interesting task to analyze the difference between nonnegative forms and sums of squares in this case. Experimentally we were not able to construct nonnegative forms that are not sums of squares. Furthermore, we note that in the setting of Theorem \ref{Thm:Main2} the bound of $(m - 1)$ is not optimal in general. Consider the case
of nonnegative even symmetric octics in at least four variables, which is a $5$-dimensional convex cone. Following Theorem \ref{thm:Timofte} such forms are nonnegative if and only if
they are nonnegative at all $3$-points. But using Timofte's theorem we know that nonnegativity can be decided at $2$-points in this case. But
whenever the degree $4d$ is significantly larger than the number of variables, the bound of $(m - 1)$ components is significantly more useful. 
An interesting future prospect would be to analyze these bounds in an asymptotic sense.

Additionally, from a computational viewpoint it would be interesting to extend the experimental approach used in Section \ref{Sec:DimensionFour} in order to achieve more understanding of the semialgebraic structure of the cone of nonnegative even symmetric forms of degree $4d$ and its subcones. 

Maybe the most interesting follow-up task is to shed light at the numbers $M_{n,4d}^{(k)}$ for $k \geq 3$ as well as a deeper understanding of the geometrical and topological structure of the sets $A_{n,4d}^{(k)}$.

\section*{Acknowledgments}
We would like to thank Thorsten Theobald for his support during the development of this article.

\bibliographystyle{amsplain}
\bibliography{uniformbound}

\end{document}